\DeclareMathAlphabet{\mathcal}{OMS}{cmsy}{m}{n} %
\newtheorem{assumption}{Assumption}
\newtheorem{lemma}{Lemma}
\newtheorem{proposition}{Proposition}
\newtheorem{theorem}{Theorem}
\newtheorem{corollary}{Corollary}
\newtheorem{definition}{Definition}
\newtheorem{result}{Result}
\def\eps{{\epsilon}}
\def\vc{{c}}
\def\vu{{u}}
\def\vw{{w}}
\def\vx{{x}}
\def\mA{{A}}
\def\mB{{B}}
\def\mK{{K}}
\DeclareMathAlphabet{\mathsfit}{\encodingdefault}{\sfdefault}{m}{sl}
\SetMathAlphabet{\mathsfit}{bold}{\encodingdefault}{\sfdefault}{bx}{n}
\def\gA{{\mathcal{A}}}
\def\gP{{\mathcal{P}}}
\def\gU{{\mathcal{U}}}
\def\sD{{\mathbb{D}}}
\def\sP{{\mathbb{P}}}
\def\sR{{\mathbb{R}}}
\def\sX{{\mathbb{X}}}
\newcommand{\E}{\mathbb{E}}
\newif\ifcomments\commentsfalse
\newif\ifshowold\showoldfalse
\newif\ifshowreviewcomments\showreviewcommentsfalse
\newif\ifshowproof\showprooftrue
\newcommand{\OLD}[1]{\ifshowold {\textcolor{brown}{OLD: #1}} \else %
    \fi}
\newcommand{\PROOFS}[1]{\ifshowproof { #1} \else %
    \fi}
\newcommand{\Footnote}[1]{\ifshowproof 
\else { #1}
    \fi}
\newcommand\copyrighttext{%
		\footnotesize \copyright 2025 IEEE. Personal use of this material is permitted. Permission from IEEE must be
obtained for all other uses, in any current or future media, including
reprinting/republishing this material for advertising or promotional purposes, creating new
collective works, for resale or redistribution to servers or lists, or reuse of any copyrighted
component of this work in other works.
}
\newcommand\copyrightnotice{%
	\begin{tikzpicture}[remember picture,overlay]
		\node[anchor=south,yshift=10pt] at (current page.south) {\fbox{\parbox{\dimexpr\textwidth-\fboxsep-\fboxrule\relax}{\copyrighttext}}};
	\end{tikzpicture}%
}
\newcommand{\blista}{\renewcommand{\labelenumi}{(\roman{enumi})} 
\begin{enumerate}}
\newcommand{\elista}{\end{enumerate} \renewcommand{\labelenumi}{\arabic{enumi}.}}
\title{\LARGE \bf
On the risk levels of distributionally robust chance constrained problems
}
\author{Moritz Heinlein$^{1}$, Teodoro Alamo$^{2}$,
        and Sergio Lucia$^{1}$%
\thanks{*The research leading to these results has received funding from the DFG (German Research Foundation) under grant agreement number 423857295.
T. Alamo acknowledges support from grants PID2022-142946NA-I00 and PID2022-141159OB-I00, funded by MICIU/AEI/ 10.13039/501100011033 and by ERDF/EU. The authors used ChatGPT (\url{https://chat.openai.com/}) for alternative formulations.(\emph{Corr. author: M. Heinlein})}%
\thanks{$^{1}$ M. Heinlein and S. Lucia are with the Chair of Process Automation Systems at Technische Universität Dortmund, Emil-Figge-Str. 70, 44227 Dortmund, Germany (email: $\{ \text{moritz.heinlein}\}$, $\{\text{sergio.lucia}\}$ @tu-dortmund.de).}%
\thanks{$^2$ T. Alamo is with the Department of Systems Engineering and Automation, Universidad de Sevilla, Sevilla, Spain (e-mail: talamo@us.es).} %
}
\begin{document}

\maketitle
\PROOFS{\copyrightnotice}
\thispagestyle{empty}
\pagestyle{empty}

\begin{abstract}

In this paper, we discuss the utilization of perturbed risk levels (PRLs) for the solution of chance-constrained problems via sampling-based approaches. PRLs allow the consideration of distributional ambiguity by rescaling the risk level of the nominal chance constraint.
Explicit expressions of the PRL exist for some discrepancy-based ambiguity sets.

We propose a discrepancy functional not included in previous comparisons of different PRLs based on the likelihood ratio, which we term ,,relative variation distance" (RVD). If the ambiguity set can be described by the RVD, the rescaling of the risk level with the PRL is in contrast to other discrepancy functionals possible even for very low risk levels. 
We derive distributionally robust one- and two-level guarantees for the solution of chance-constrained problems with randomized methods.
We demonstrate the viability of the derived guarantees for a randomized MPC under distributional ambiguity.

\end{abstract}

\section{Introduction}
Chance constrained formulations~\cite{millerChanceConstrainedProgramming1965, tempoRandomizedAlgorithmsAnalysis2013} allow a certain probability of constraint violation, known as the \emph{risk level}. Their formulation requires an accurate knowledge of the probability distribution of the underlying uncertainty, and even then the problem can be difficult to solve.

Randomized methods such as the scenario approach~\cite{calafioreScenarioApproachRobust2006, campiIntroductionScenarioApproach2018} or statistical learning~\cite{alamoRandomizedStrategiesProbabilistic2009} approximate chance constrained problems by sampling from the uncertainty distribution. Especially in early design stages without measurements, this distribution may be poorly known.

Distributionally robust optimization tackles this challenge by considering all uncertainties within an ambiguity set~\cite{rahimianFrameworksResultsDistributionally2022}. Discrepancy-based ambiguity sets are described as the set of all probability distributions within a certain distance around a nominal distribution.
There exists a range of discrepancy functionals used to formulate distributionally robust optimization problems~\cite{rahimianFrameworksResultsDistributionally2022}. 
Other descriptions of ambiguity sets include moment-based~\cite{mcallisterDistributionallyRobustModel2024,liDistributionallyRobustOptimization2022}, shape-preserving~\cite{nemirovskiConvexApproximationsChance2007}, and kernel-based models (see~\cite{rahimianFrameworksResultsDistributionally2022} for a review).

For some discrepancy functionals, mostly for continuous distributions, only the risk level of the chance constraint needs to be adapted to also guarantee distributionally robust results. The adapted risk level is called the \emph{perturbed risk level} (PRL)~\cite{jiangDatadrivenChanceConstrained2016,tsengRandomConvexApproximations2016a, hongAmbiguousProbabilisticPrograms2013}. Therefore, the distributionally robust program with a PRL has the same complexity as the original chance constrained problem and does not require the calculation of Lipschitz constants, in contrast to Wasserstein-distance based backoffs~\cite{kuhnWassersteinDistributionallyRobust2019a}.
In the work of \cite{erdoganAmbiguousChanceConstrained2006}, the ambiguity in the solution of randomized problems~\cite{calafioreScenarioApproachRobust2006} was considered with respect to the Prohorov distance, which is difficult to evaluate even for common distributions~\cite{gibbsChoosingBoundingProbability2002}.
In~\cite{jiangDatadrivenChanceConstrained2016} and~\cite{tsengRandomConvexApproximations2016a} PRLs are presented to guarantee chance constraint satisfaction under ambiguity measured by different $\phi$-divergences. 
For the Wasserstein distance, a data-driven reformulation of affine chance constraints leads to mixed-integer conic programs~\cite{chenDataDrivenChanceConstrained2024}, but no direct PRL.
In~\cite{nemirovskiConvexApproximationsChance2007, hongAmbiguousProbabilisticPrograms2013}, a PRL was calculated based on the upper bound of the likelihood ratio between the ambiguous distributions and a nominal distribution. However, the potential of this metric has not been further explored in later works.~\cite{tsengRandomConvexApproximations2016a,jiangDatadrivenChanceConstrained2016}.%
In this paper, we term the corresponding discrepancy functional \emph{relative variation distance} (RVD).

This paper contains two main contributions. First, we propose the RVD as an especially well-suited distance for chance constraints with low risk levels, as it is desired in most control applications. For other discrepancy functionals, the PRL declines rapidly for small risk levels. To underline the applicability of the RVD, we derive a closed expression for the RVD between Gaussians.
Second, we derive distributionally robust one-level and two-level guarantees on the violation probability for the data-based solution of chance constraints. We show their validity for the application of the scenario approach in randomized MPC.
 Section~\ref{sec:PRL} introduces the distributionally robust setting, as well as PRLs and the RVD. Section~\ref{sec:Comp_of_PRL} compares the RVD to PRLs based on other discrepancy functionals for small risk levels.
Section~\ref{sec:application} includes the derivation of distributionally robust guarantees of the violation probability for the solution of chance constraints via randomized methods.
In Section~\ref{sec:Case_Study} the derived guarantees are tested for a randomized MPC.

\emph{Notation:} We denote that the random variable 
 $\delta$ follows the distribution $\mathcal{P}$ by writing 
 $\delta \sim \mathcal{P}$. The probability of an event $E$ according to $\mathcal{P}$ is written as $\sP_{\mathcal{P}}(E)$.
The probability density function of the distribution $\mathcal{P}$ is denoted as $f_{\mathcal{P}}:\Delta_{\mathcal{P}} \rightarrow \sR^+ $, where $\Delta_{\mathcal{P}}$ is the support of $\mathcal{P}$, which is the smallest set satisfying $\sP_{\mathcal{P}}(\delta\in\Delta_{\mathcal{P}})=1$. With $\Sigma\prec\Omega$, the positive definiteness of the matrix $\Omega-\Sigma$ is implied. The indicator function $I(x):\sR\rightarrow \{0,1\}$ is 1 if $x\geq0$, else 0.

\section{Perturbed risk level}\label{sec:PRL}
We consider chance constrained optimization problems
\begin{subequations} \label{eq:CCP}
\begin{align}
    \min_{\vx\in\sX} \,& \ J(\vx)\\
    \text{s.t.}\quad & \sP_{ \mathcal{P}} \left(g_{\delta}(\vx)\leq 0\right) \geq 1-\epsilon, \label{eq:CCP:subeq:CC}
\end{align}
\end{subequations}
where $\delta\in\Delta_{\mathcal{P}}\subseteq\sR^{n_d}$ is a randomly distributed variable according to $\mathcal{P}$ and $g_{\delta}(\vx)$ is a constraint depending on the uncertainty $\delta$. %
The risk level, defined as the upper bound on the violation probability of $g_{\delta}(\vx)\leq 0 $, is specified as $\epsilon$.

To consider ambiguity in the distribution of the random variable $\delta$, the distributionally robust chance constrained optimization problem is formulated as:
\begin{subequations} \label{eq:DCCP}
\begin{align}
    \min_{\vx\in\sX} \,& \ J(\vx)\\
    \text{s.t.}\quad & \inf_{\mathcal{P}\in\mathcal{A}} \{\sP_{\mathcal{P}} \left(g_{\delta}(\vx)\leq 0\right)\} \geq 1-\epsilon, \label{eq:DCCP:subeq:CC}
\end{align}
\end{subequations}
where the set $\mathcal{A}$ describes the set of possible probability distributions, called the ambiguity set.
Discrepancy-based ambiguity sets include all probability distributions which are close to a nominal distribution $\hat{\mathcal{P}}$ with respect to some discrepancy functional $\rho$
\begin{equation} \label{eq:size_of_amb_set}
    \mathcal{A} = \{ \mathcal{P}|\rho(\mathcal{P}, \hat{\mathcal{P}}) \leq M\},
\end{equation}
where $M\in\sR$ bounds the size of the ambiguity set~\cite{rahimianFrameworksResultsDistributionally2022}.
Some discrepancy functionals can only yield finite distances for distributions $\mathcal{P}$, for which every possible sample is also a possible sample from the nominal distribution $\hat{\mathcal{P}}$. This is covered in the following assumption, which requires absolute continuity with respect to $\hat{\mathcal{P}}$ (for more details see~\cite{follandSignedMeasuresDifferentiation1999}).
\begin{assumption} \label{ass:greater_zero}
    The support of the nominal distribution $\Delta_{\hat{\mathcal{P}}}$ covers the support $\Delta_{\mathcal{P}}$ of all possible distributions in the ambiguity set $\mathcal{A}$:
    $$ \Delta_{\mathcal{P}} \subseteq \Delta_{\hat{\mathcal{P}}} \subseteq{\Delta}, \forall  \mathcal{P} \in \mathcal{A}.$$
\end{assumption}
For readability, we denote the governing support as $\Delta$ and only refer to the specific support if necessary.
There are many distributions, like the normal distribution, for which Assumption~\ref{ass:greater_zero} is satisfied naturally.

In the context of distributionally robust optimization, it is important to be able to bound the probability of any event $E\subseteq \Delta$, under the assumption that the underlying probability distribution $\mathcal{P}$ belongs to an ambiguity set $\mathcal{A}$. That is, one is interested in bounding $\max_{\mathcal{P}\in \mathcal{A}}\sP_{\mathcal{P}}\{E\}$. In~\cite{jiangDatadrivenChanceConstrained2016}, a rescaling of the nominal risk level to account for distributional ambiguity was termed perturbed risk level (PRL). This concept is stated formally in the following definition.
 \begin{definition}[Perturbed Risk Level] \label{def:PRL} 
Given  the ambiguity set $\mathcal{A}$ on the governing support $\Delta$ and the nominal distribution $\hat{\mathcal{P}}\in \mathcal{A}$,  the PRL 
     $\hat{\epsilon}_{\mathcal{A}}: [0,1]\to[0,1]$ is defined as
\begin{multline} \label{eq:PRL:Def}
\hat{\epsilon}_{\mathcal{A}} (\epsilon) := \max \{ \alpha : \sP_{\hat{\mathcal{P}}}\{E\} \leq \alpha\\
        \Rightarrow \sup_{\mathcal{P}\in \mathcal{A}}\sP_{\mathcal{P}}\{E\} \leq \epsilon, \forall E \subseteq \Delta  \}.
        \end{multline}
\end{definition}
We will denote the PRL for ambiguity sets of the form~\eqref{eq:size_of_amb_set} as $\hat{\epsilon}_{M_{\rho}}(\epsilon)$, where $M$ denotes the radius of the ambiguity set with respect to the discrepancy functional $\rho$. If the context is clear, we omit the dependence of $\hat{\epsilon}_{\mathcal{A}}$ on $\epsilon$.
For notational convenience, we denote the probability of violation of the constraint $g_\delta(x)\leq 0$ for given probability distribution $\mathcal{P}$ and $x$ as $V_{\mathcal{P}} (x) = \sP_{\mathcal{P}}\{\delta\in\Delta: g_{\delta}(x)> 0\}$.

The following lemma, which is a direct consequence of the definition of the PRL, provides a way to bound the probability of violation of a given constraint in an ambiguity set for~\eqref{eq:DCCP}~\cite{tsengRandomConvexApproximations2016a,jiangDatadrivenChanceConstrained2016}.

\begin{lemma}\label{lemma:PRL:VN}
Suppose $\hat{\epsilon}_{\mathcal{A}}: [0,1]\to[0,1]$  is a  PRL for the ambiguity set $\mathcal{A}$ and the nominal distribution $\hat{\mathcal{P}}\in \mathcal{A}$. Then,
$$  V_{\hat{\mathcal{P}}} (x) \leq \hat{\epsilon}_{\mathcal{A}} (\epsilon) \Rightarrow  V_{\mathcal{P}} (x) \leq  \epsilon, \; \forall \mathcal{P} \in \mathcal{A}.$$
\end{lemma}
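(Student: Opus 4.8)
The plan is to derive Lemma~\ref{lemma:PRL:VN} as an immediate specialization of Definition~\ref{def:PRL}, simply by choosing the event $E$ appropriately. First I would fix an arbitrary decision $x\in\sX$ and define the violation event $E_x := \{\delta\in\Delta : g_\delta(x) > 0\}$, which is a measurable subset of the governing support $\Delta$. By the very notation introduced just before the lemma, $\sP_{\mathcal{P}}\{E_x\} = V_{\mathcal{P}}(x)$ for every $\mathcal{P}$, and in particular $\sP_{\hat{\mathcal{P}}}\{E_x\} = V_{\hat{\mathcal{P}}}(x)$.

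Next I would invoke the defining property of the PRL with $\alpha = \hat{\epsilon}_{\mathcal{A}}(\epsilon)$. By Definition~\ref{def:PRL}, $\hat{\epsilon}_{\mathcal{A}}(\epsilon)$ is (the maximum of) those $\alpha$ for which the implication $\sP_{\hat{\mathcal{P}}}\{E\} \leq \alpha \Rightarrow \sup_{\mathcal{P}\in\mathcal{A}}\sP_{\mathcal{P}}\{E\} \leq \epsilon$ holds for all $E\subseteq\Delta$; in particular this implication holds with $\alpha = \hat{\epsilon}_{\mathcal{A}}(\epsilon)$ itself, since the set over which the maximum is taken is attained (one can note the defining condition is closed under taking the supremum, or simply that any $\alpha$ strictly below the maximum works and a limiting argument covers equality). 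Hence the hypothesis $V_{\hat{\mathcal{P}}}(x) \leq \hat{\epsilon}_{\mathcal{A}}(\epsilon)$, i.e. $\sP_{\hat{\mathcal{P}}}\{E_x\} \leq \hat{\epsilon}_{\mathcal{A}}(\epsilon)$, yields $\sup_{\mathcal{P}\in\mathcal{A}}\sP_{\mathcal{P}}\{E_x\} \leq \epsilon$.

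Finally, from $\sup_{\mathcal{P}\in\mathcal{A}}\sP_{\mathcal{P}}\{E_x\} \leq \epsilon$ it follows that $\sP_{\mathcal{P}}\{E_x\} \leq \epsilon$ for every individual $\mathcal{P}\in\mathcal{A}$, that is, $V_{\mathcal{P}}(x) \leq \epsilon$ for all $\mathcal{P}\in\mathcal{A}$. Since $x$ was arbitrary, this establishes the claimed implication.

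I do not expect any genuine obstacle here — the lemma is essentially a restatement of the definition applied to a particular family of events. The only point requiring mild care is whether the implication in Definition~\ref{def:PRL} is valid at the endpoint $\alpha = \hat{\epsilon}_{\mathcal{A}}(\epsilon)$ (the maximizer) as opposed to only for $\alpha$ strictly below it; this is handled by observing that the set of admissible $\alpha$ is a down-closed subset of $[0,1]$ that contains its supremum (equivalently, that the condition "$\sP_{\hat{\mathcal{P}}}\{E\}\le\alpha \Rightarrow \sup_{\mathcal{P}}\sP_{\mathcal{P}}\{E\}\le\epsilon$" is preserved under increasing limits of $\alpha$), so the "$\max$" in the definition is well posed and attained, which is exactly what the lemma uses.
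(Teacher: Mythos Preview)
Your proposal is correct and follows essentially the same route as the paper: both arguments simply specialize the defining implication of the PRL in Definition~\ref{def:PRL} to the particular event $E_x=\{\delta\in\Delta: g_\delta(x)>0\}$ and read off $V_{\mathcal{P}}(x)\le\epsilon$ from $\sup_{\mathcal{P}\in\mathcal{A}}\sP_{\mathcal{P}}\{E_x\}\le\epsilon$. Your additional remark about the maximizer in Definition~\ref{def:PRL} being attained is a nice point of care, but the paper simply takes this as given by the use of ``$\max$'' in the definition.
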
 

\begin{proof}
From \eqref{eq:PRL:Def} we have 
$$ \sP_{\hat{\mathcal{P}}}\{E\} \leq \hat{\epsilon}_{\mathcal{A}} (\epsilon)  
        \Rightarrow \sup_{\mathcal{P}\in \mathcal{A}}\sP_{\mathcal{P}}\{E\} \leq \epsilon, \forall E \subseteq \Delta. $$
        Particularizing $E$ to $\{\delta\in \Delta\,:\,g_\delta(x) >0\}$,
        we obtain that $ \sP_{\hat{\mathcal{P}}}\{\delta\in \Delta : g_\delta(x) >0\} \leq \hat{\epsilon}_{\mathcal{A}} (\epsilon)$ implies   
        $$ \sup_{\mathcal{P}\in \mathcal{A}}\sP_{\mathcal{P}}\{\delta\in \Delta: g_\delta(x) >0\} \leq \epsilon. $$
        Equivalently, 
        $  V_{\hat{\mathcal{P}}} (x) \leq \hat{\epsilon}_{\mathcal{A}} (\epsilon) \Rightarrow  V_{\mathcal{P}} (x) \leq  \epsilon, \; \forall \mathcal{P} \in \mathcal{A}.$ \end{proof}%
As a direct consequence of Lemma~\ref{lemma:PRL:VN}, Problem~\eqref{eq:DCCP} can be conservatively approximated as
\begin{subequations} \label{eq:CCP_with_PRL}
\begin{align}
    \min_{\vx\in\sX} \,& \ J(\vx)\\
    \text{s.t.}\quad & \sP_{\hat{\mathcal{P}}} \left(g_{\delta}(\vx)\leq 0\right) \geq 1-\hat{\epsilon}_{\mathcal{A}}(\epsilon).
\end{align}
\end{subequations}
 
Under Assumption~\ref{ass:greater_zero}, the works~\cite{tsengRandomConvexApproximations2016a, jiangDatadrivenChanceConstrained2016} give PRLs for the total variation distance, the Hellinger distance, the Kullback-Leibler distance and the Neyman $\chi^2$ distance.
All four of the mentioned distances belong to the class of $\phi$-divergence-based distances~\cite{jiangDatadrivenChanceConstrained2016}.

In~\cite{nemirovskiConvexApproximationsChance2007,hongAmbiguousProbabilisticPrograms2013}, the computation for the PRL is presented for the following discrepancy functional
    \begin{equation} \label{eq:amb_distances}
    \rho_{\text{RVD}}(\mathcal{P},\hat{\mathcal{P}})=\min_{E\subseteq \Delta_{\mathcal{\hat{P}}}} \{M_{\text{RVD}}: \sP_{\mathcal{P}}(E)\leq M_{\text{RVD}} \sP_{\hat{\mathcal{P}}}(E) \},
\end{equation}
We term this distance, as to our knowledge it has remained unnamed, \emph{Relative Variation Distance (RVD)} due to its similarity to the total variation distance.

The RVD can be written in a differential form for continuous distributions~\cite{hongAmbiguousProbabilisticPrograms2013}
\begin{equation} \label{eq:RVD_differential}
    \rho_{\text{RVD}}(\mathcal{P},\hat{\mathcal{P}})=\sup_{\delta\in\Delta_{\mathcal{\hat{P}}}} \frac{f_{\mathcal{P}}(\delta)}{f_{\hat{\mathcal{P}}}(\delta)}.
\end{equation}
The RVD describes an upper bound on the likelihood ratio of any distribution in the ambiguity set to the nominal distribution.
The RVD can be used to relate the different $\phi$-divergence-based distances with each other~\cite{jiangDatadrivenChanceConstrained2016}.
For the RVD to be finite, Assumption~\ref{ass:greater_zero} needs to hold.
The PRL for the RVD for an ambiguity set bounded by $M_{\text{RVD}}$ was shown in~\cite{nemirovskiConvexApproximationsChance2007} to be
\begin{equation} \label{eq:PRL_RVD}
    \hat{\epsilon}_{M_{\text{RVD}}} (\epsilon)=\frac{\epsilon}{M_{\text{RVD}}}.
\end{equation}
As long as $\epsilon$ is greater than zero and $M_{\text{RVD}}$ is finite, there will always exists a PRL greater than zero.
As a safe design often requires small risk levels, this property is very important.

For many distributions, the RVD can be calculated in a closed form. 
The extreme value theorem together with Assumption~\ref{ass:greater_zero} guarantees a finite $M_{\text{RVD}}$ between two distributions with bounded support, for example uniform or triangular distributions.
Assumption~\ref{ass:greater_zero} also covers distributions with finite support. However for the RVD to exist, the ambiguity set is constrained to the discrete support of the nominal distribution~\cite{hongAmbiguousProbabilisticPrograms2013}.
For distributions with (semi)-infinite support, e.g. the exponential distribution or the normal distribution, the nominal distribution needs to represent the distribution with the longest tail.

One of the contributions of this paper is a closed-form solution for multivariable Gaussian distributions.
\begin{proposition} \label{prop:normal}
    The RVD between two multivariate Gaussian distributions with $\delta\in\sR^{n_d}$
    \begin{align*}
        f_{\mathcal{P}}(\delta)=\frac{1}{\sqrt{(2 \pi)^{n_d}\det{\Sigma}}} \exp{-\frac{1}{2}\left((\delta -\mu)^{\intercal}\Sigma^{-1}(\delta -\mu)\right)},\\
        f_{\hat{\mathcal{P}}}(\delta)=\frac{1}{\sqrt{(2 \pi)^{n_d}\det{\hat{\Sigma}}}} \exp{-\frac{1}{2}\left((\delta -\hat{\mu})^{\intercal}\hat{\Sigma}^{-1}(\delta -\hat{\mu})\right)},
\end{align*}
exists, if $\hat{\Sigma}^{-1}\preceq\Sigma^{-1}$ with
\begin{multline} \label{eq:M_RVD_Gaussian}
    M_{\text{RVD}}=\sqrt{\frac{\det{\hat{\Sigma}}}{\det{\Sigma}} }\exp (-\frac{1}{2}((\delta_{\text{max}} -\mu)^{\intercal}\Sigma^{-1}(\delta_{\text{max}} -\mu) \\
    -(\delta_{\text{max}} -\hat{\mu})^{\intercal}\hat{\Sigma}^{-1}(\delta_{\text{max}} -\hat{\mu}))),
\end{multline}
where 
$    \delta_{\text{max}}=\left[\Sigma^{-1}- \hat{\Sigma}^{-1} \right]^{-1} \left[ \Sigma^{-1} \mu -\hat{\Sigma}^{-1}\hat{\mu}\right]$.
\end{proposition}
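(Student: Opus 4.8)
The plan is to work directly from the differential characterisation of the RVD in~\eqref{eq:RVD_differential}, which reduces the computation of $M_{\text{RVD}} = \sup_{\delta\in\Delta}f_{\mathcal{P}}(\delta)/f_{\hat{\mathcal{P}}}(\delta)$ to an unconstrained quadratic minimisation. Writing out the ratio of the two Gaussian densities, the $\delta$-independent prefactors combine into $\sqrt{\det\hat{\Sigma}/\det\Sigma}$, and the two exponents combine into $\exp(-\tfrac12 q(\delta))$ with
$$q(\delta) := (\delta-\mu)^{\intercal}\Sigma^{-1}(\delta-\mu) - (\delta-\hat{\mu})^{\intercal}\hat{\Sigma}^{-1}(\delta-\hat{\mu}).$$
Since $t\mapsto e^{-t/2}$ is strictly decreasing, the supremum of the ratio over $\delta\in\sR^{n_d}$ is attained exactly where $q$ attains its minimum, so the whole problem collapses to $\min_{\delta} q(\delta)$.

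Next I would expand $q$ into standard quadratic form, $q(\delta) = \delta^{\intercal}(\Sigma^{-1}-\hat{\Sigma}^{-1})\delta - 2\delta^{\intercal}(\Sigma^{-1}\mu-\hat{\Sigma}^{-1}\hat{\mu}) + \mu^{\intercal}\Sigma^{-1}\mu - \hat{\mu}^{\intercal}\hat{\Sigma}^{-1}\hat{\mu}$, whose Hessian is $2(\Sigma^{-1}-\hat{\Sigma}^{-1})$. This is where the hypothesis enters: $q$ is convex, hence bounded below and possessing a global minimiser, precisely when $\hat{\Sigma}^{-1}\preceq\Sigma^{-1}$; if this fails, there is a direction along which $q\to-\infty$, the ratio is unbounded, and no finite $M_{\text{RVD}}$ exists. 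Under the hypothesis, setting $\nabla q(\delta)=0$ yields the linear system $(\Sigma^{-1}-\hat{\Sigma}^{-1})\delta = \Sigma^{-1}\mu-\hat{\Sigma}^{-1}\hat{\mu}$, whose solution is the stated $\delta_{\text{max}} = (\Sigma^{-1}-\hat{\Sigma}^{-1})^{-1}(\Sigma^{-1}\mu-\hat{\Sigma}^{-1}\hat{\mu})$; by convexity this stationary point is a global minimiser, so the supremum is genuinely attained there. Substituting $\delta_{\text{max}}$ back into $\sqrt{\det\hat{\Sigma}/\det\Sigma}\,\exp(-\tfrac12 q(\delta))$ then gives exactly~\eqref{eq:M_RVD_Gaussian}, with no further simplification needed.

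The only genuinely delicate point is the boundary of the hypothesis: the closed form for $\delta_{\text{max}}$ requires $\Sigma^{-1}-\hat{\Sigma}^{-1}$ to be invertible, i.e.\ $\hat{\Sigma}^{-1}\prec\Sigma^{-1}$, whereas convexity (hence a finite infimum) only needs $\hat{\Sigma}^{-1}\preceq\Sigma^{-1}$ together with the linear term lying in the range of $\Sigma^{-1}-\hat{\Sigma}^{-1}$. I would handle this by either restricting the statement to the strict case, or noting that on the semidefinite boundary the minimiser is non-unique but every solution of the normal equations produces the same value of $q$, so the formula for $M_{\text{RVD}}$ still holds with $\delta_{\text{max}}$ read as any such solution. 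Everything else is routine linear algebra — combining the density prefactors, collecting the exponent, and the substitution to verify the final expression — so I expect the subtlety about degeneracy of $\Sigma^{-1}-\hat{\Sigma}^{-1}$ (equivalently, the precise condition under which the RVD is finite) to be the main thing worth being careful about.
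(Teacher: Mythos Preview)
Your argument is correct and follows essentially the same route as the paper's proof: both compute the stationary point of the density ratio via the first-order condition and verify optimality through the Hessian, the only cosmetic difference being that you work with the quadratic exponent $q(\delta)$ while the paper differentiates the ratio itself. Your remark on the degenerate boundary case $\hat{\Sigma}^{-1}\preceq\Sigma^{-1}$ (non-strict), where $\Sigma^{-1}-\hat{\Sigma}^{-1}$ is singular and the stated formula for $\delta_{\text{max}}$ breaks down, is a genuine subtlety that the paper's proof does not address.
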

\begin{proof}
The RVD can be calculated as the maximum of the ratio $f_{\mathcal{P}}(\delta)/f_{\hat{\mathcal{P}}}(\delta)$. For this, we set the gradient of the ratio to zero, which gives $\delta_{\text{max}}$. The Hessian of the ratio is positive semi-definite at $\delta_{\text{max}}$ if $\hat{\Sigma}^{-1}\preceq\Sigma^{-1}$. For a more detailed proof, we refer to the supplementary material\Footnote{\footnote{\label{arxiv}\url{https://doi.org/10.48550/arXiv.2409.01177}}}\PROOFS{in Section~\ref{sec:supplementary}}.
\OLD{The RVD can be calculated as the maximum of
\begin{multline*}
    q(\delta)=\sqrt{\frac{\det{\hat{\Sigma}}}{\det{\Sigma}} }\exp (-\frac{1}{2}((\delta -\mu)^{\intercal}\Sigma^{-1}(\delta -\mu) \\
    -(\delta -\hat{\mu})^{\intercal}\hat{\Sigma}^{-1}(\delta -\hat{\mu}))),
\end{multline*}
as in~\eqref{eq:RVD_differential}.
For this, we take the first derivative of $q(\delta)$ with respect to $\delta$
\begin{equation} \label{eq:nabla_q_RVD_gauss}
    \nabla_{\delta}  q (\delta)=\left(\hat{\Sigma}^{-1}(\delta-\hat{\mu})-\Sigma^{-1}(\delta-\mu) \right) q(\delta),
\end{equation}
which is zero for
    \begin{align*}
        \delta_{\text{max}}=\left[\hat{\Sigma}^{-1} - \Sigma^{-1} \right]^{-1} \left[\hat{\Sigma}^{-1}\hat{\mu} - \Sigma^{-1} \mu \right],
    \end{align*}
because the first factor of~\eqref{eq:nabla_q_RVD_gauss} is zero.
To verify that $\delta_{\text{max}}$ is indeed a maximum, the hessian $\nabla^2{\delta} q$ is calculated
\begin{multline*}
    \nabla^2_{\delta} q(\delta)= \\
    \left(\hat{\Sigma}^{-1} -\Sigma^{-1}
    +\left(\hat{\Sigma}^{-1}(\delta-\hat{\mu})-\Sigma^{-1}(\delta-\mu)\right)^2\right) q(\delta)
\end{multline*}
If $\delta=\delta_{\text{max}}$ is plugged in, the bracket which was zero for the first derivative and appears squared in $\nabla^2 q$ will also be zero. So the only thing relevant for $\nabla^2 q(\delta_{\text{max}})\preceq0$ is
\begin{align*}
    \Sigma^{-1}-\hat{\Sigma}^{-1}\succ 0.
\end{align*}
Therefore $\hat{\Sigma}^{-1}\preceq\Sigma^{-1}$ is sufficient for the existence of a finite $M_{\text{RVD}}$ for two multivariate normal distributions.
The corresponding RVD, as shown in~\eqref{eq:M_RVD_Gaussian} can be calculated as $M_{\text{RVD}}=q(\delta_{\text{max}})$.}
\end{proof}
The expression simplifies to
$  M_{\text{RVD}}=\sqrt{\frac{\det{\hat{\Sigma}}}{\det{\Sigma}}}$, if $\mu=\hat{\mu}$.\\
The Gaussian distributions belonging to ambiguity sets bounded by different values for the RVD are displayed in Figure~\ref{fig:Gaussian_RVD}. One can see that their upper bound is the nominal density multiplied by $M_{\text{RVD}}$.
\begin{figure}
    \centering
    \begin{subfigure}{.23\textwidth}
        \centering
        \includegraphics[width=1\textwidth]{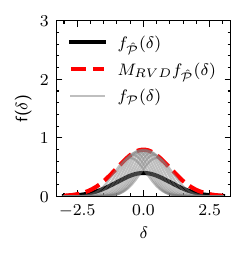}
        \caption{$\mathcal{A}=\rho_{\text{RVD}}(\mathcal{P},\hat{\mathcal{P}})\leq 2$}
        \label{fig:Gaussian_RVD_2}
    \end{subfigure}
        \begin{subfigure}{.23\textwidth}
        \centering
        \includegraphics[width=1\textwidth]{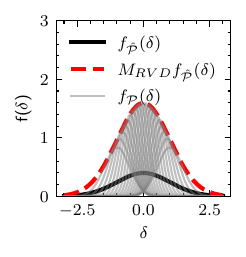}
        \caption{$\mathcal{A}=\rho_{\text{RVD}}(\mathcal{P},\hat{\mathcal{P}})\leq 4$}
        \label{fig:Gaussian_RVD_4}
    \end{subfigure}
    \caption{Density $f_{\mathcal{P}}(\delta)$ of all Gaussian distributions (grey) belonging to ambiguity sets with different sizes for a nominal distribution (black) $f_{\hat{\mathcal{P}}}(\delta)$ with $\hat{\mu}=0,\; \hat{\sigma} =1$. }
    \label{fig:Gaussian_RVD}
    \vspace{-2ex}
\end{figure}
\section{Comparison of the perturbed risk levels} \label{sec:Comp_of_PRL}

For many applications, low risk levels are required. For ambiguity sets described by the RVD distance, the relation  $\hat{\epsilon}_{M_{\text{RVD}}} /\epsilon=  {M_{\text{RVD}}}^{-1}$ is constant due to the alignment of the definitions of the PRL and the RVD.
For the distances for which a PRL is derived in~\cite{tsengRandomConvexApproximations2016a,jiangDatadrivenChanceConstrained2016} the ratio $\hat{\epsilon}_M(\epsilon)/ \epsilon$, tends analytically to zero or can not be used as the desired risk level $\epsilon$ tends to zero.

In the following, we compare the RVD to the distances in~\cite{tsengRandomConvexApproximations2016a,jiangDatadrivenChanceConstrained2016} for a fixed ambiguity in a finite set of normal distributions. 
We focus on PRLs, as they ensure distributional robustness via a straightforward reformulation and therefore, we only consider these discrepancy functionals and omit other discrepancy functionals that are well suited for normal distributions such as the Gelbrich distance.

We assume a discrete ambiguity set of 25 normal distributions $\mathcal{N}(\mu_i,\sigma_i)$ with means and standard deviations uniformly distributed on $\mu_i\in\left[-1,1\right],\ \sigma_i \in \left[1,2\right]$. These discrete distributions could be the result of a bootstrapping procedure to determine the ambiguity set from data.
For each of the discrepancy functionals presented in this section, we find the nominal distribution minimizing the size of the ambiguity set to encompass all of the 25 normal distributions by solving
\begin{subequations} \label{eq:Discr_func_mini_opt}
\begin{align} 
    \min_{\hat{\mu},\hat{\sigma},M} \,& M\\
    \text{s.t.}\quad & \rho(\mathcal{N}(\mu_i,\sigma_i),\mathcal{N}(\hat{\mu},\hat{\sigma}))\leq M,\ \forall i=1,...,25.
\end{align}
\end{subequations}
The code is openly available\footnote{\label{URL} \url{https://github.com/MoritzHein/DistriRobRiskLev}}.
For the RVD, the Hellinger distance as well as the Kullback-Leibler distance, closed form expressions for normal distributions exist~\cite{pardoStatisticalInferenceBased2006}. For the other discrepancy functionals, the integrals defining the functionals are approximated via the trapezoidal rule.
The resulting nominal distributions and the respective size of the ambiguity sets are displayed in Table~\ref{tab:Nominal_Distributions}.
\begin{table}
    \centering
        \begin{tabular}{lllll} \toprule
         Discrepancy functional& $\hat{\mu}$ & $\hat{\sigma}$ & $M$ & $\hat{\epsilon}_M$ for  $\epsilon=0.01$  \\
         \midrule
         RVD& 0.15 & 2.03 & 2.05 & 0.0049\\
         Kullback-Leibler & 0.00 & 1.59 & 0.19 & 0\\
         Hellinger & -0.02 & 1.48 & 0.23 & 0 \\
         $\chi^2$ & 0.05 & 1.75 & 0.36 & 0.0003\\
         Total variation & -0.04 & 1.59 & 0.24 & 0\\
         \bottomrule
    \end{tabular}
    \caption{Nominal distributions for different discrepancy functionals, the size of the respective ambiguity set to cover 25 normal distributions and the respective PRL for $\epsilon=0.01$.}
    \label{tab:Nominal_Distributions}
    \vspace{-3ex}
\end{table}
While for most of the distances the nominal distribution was optimized to be rather near to $\hat{\mu}=0$ and $\hat{\sigma} \approx 1.5$, for the RVD $\hat{\sigma}>2$ due to the necessity of $\hat{\sigma}^{-1}\preceq\sigma^{-1}$, as derived in Proposition~\ref{prop:normal}. Thus, the RVD is sensitive to the distributions with large $\sigma$, which also explains the larger deviation of $\hat{\mu}$ from zero.

Although it may seem restricting that the RVD only contains distributions with a smaller tail,
the nominal distribution is a design parameter for the ambiguity set and can be chosen in advance, similarly to moment-based ambiguity sets~\cite{liDistributionallyRobustOptimization2022}.

For a safety level of $99\%$, which is a typical value for example in the control community, the corresponding PRLs are also listed in Table~\ref{tab:Nominal_Distributions}. For the RVD the corresponding risk level is roughly half the nominal risk level, while for the $\chi^2$-distance, it is around $3\%$ of the nominal risk level. For all other distances the PRL is $0$ and the corresponding distributionally robust chance constraints do not allow any probability of violation.

Figure~\ref{fig:Comp_gaussian} shows the ratio of the PRL $\hat{\epsilon}_M$ to the nominal risk level $\epsilon$. For large $\epsilon$, the PRL of all distances exceeds that of the RVD, but the cost of reducing the nominal risk grows for all functionals except the RVD. For the RVD, the ratio remains constant, so for $\epsilon<0.4$ it outperforms the others.

The better behavior of the RVD for low risks is due to the alignment of the definitions of the PRL in~\eqref{eq:PRL:Def} and of the RVD in~\eqref{eq:amb_distances} resulting in a PRL for the RVD as defined in~\eqref{eq:PRL_RVD}. Examining~\eqref{eq:PRL_RVD}, it is clear that the ratio of risk level and PRL is a constant, in contrast to other commonly used distances. 
\begin{figure}
    \centering
    \includegraphics[width=0.45\textwidth]{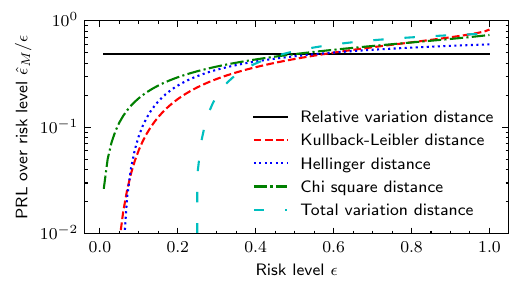}
    \caption{Comparison of the PRL of different discrepancy functionals over $\epsilon$ for an ambiguity set of 25 normal distributions.}
    \label{fig:Comp_gaussian}
    \vspace{-2ex}
\end{figure}

\section{PRLs in randomized methods} \label{sec:application}
This section presents how randomized approaches can be adapted to solve chance constrained problems in a distributionally robust fashion with a PRL.

Suppose that for every multi-sample $\sD_N=\{\delta_1,...,\delta_{N}\} \in \Delta^N,$ where $\delta_i$ are independent and identically distributed samples taken from the distribution $\hat{\mathcal{P}}$, there is an operator providing a value for the decision variable $x\in \mathbb{X}$, which we denote by $\hat{x}_s(\sD_N)$.  

Often $\hat{x}_s(\sD_N)$ is obtained from a scenario optimization problem in which the samples from $\sD_N$ are used to reduce the probability of the event $\{\delta\in \Delta : g_\delta(\hat{x}_s(\sD_N))>0\}$: 
\begin{subequations} \label{eq:RSP_for_SA}
\begin{align}
  \hat{x}_s(\sD_N) = \arg & \min_{\vx\in\sX} \, J(\vx) \\
    \text{s.t.}\quad &  g_{\delta}(\vx)\leq 0 ,\forall \delta \in \sD_N=\{ \delta^1,...,\delta^{N} \}.  
\end{align}
\end{subequations}

Under some assumptions on the functions $g_\delta(\cdot)$ and $J(\cdot)$, like convexity, finite VC-dimension, finite families, Lipschitz continuity, etc., one can derive results that bound the probability that the sampled constraints in $\sD_N$ yield a solution $\hat{\vx}_s(\sD_N)$ that does not satisfy a constraint on the probability of violation (for the nominal distribution $\hat{\mathcal{P}}$)~\cite{campiIntroductionScenarioApproach2018, tempoRandomizedAlgorithmsAnalysis2013, calafioreScenarioApproachRobust2006}. That is, there are numerous frameworks for which it is possible to obtain a function $F_N(\epsilon)$ that satisfies
 $$ \sP_{\hat{\mathcal{P}}}^{N}\{V_{\hat{\mathcal{P}}}(\hat{x}_s(\sD_N)) > \epsilon \}\leq F_N(\epsilon), \; \forall \epsilon \in [0,1].  $$
 We note that the previous inequality applies only to the nominal distribution $\hat{\mathcal{P}}$, since $\hat{x}_s(\sD_N)$ is obtained from samples from $\hat{\mathcal{P}}$. In order to bound the probability that $V_{\mathcal{P}}(\hat{x}_s(\sD_N)) > \epsilon$ in the ambiguity set $\mathcal{A}$, we introduce following assumption.

\begin{assumption}\label{assumption:F}
It is assumed that \blista
\item $\hat{\epsilon}_{\mathcal{A}}: [0,1]\to[0,1]$  is a  PRL for the ambiguity set $\mathcal{A}$ and the nominal distribution $\hat{\mathcal{P}}\in \mathcal{A}$. %

\item The operator $\hat{x}_s:\Delta^N \to \mathbb{X}$ satisfies 
    $$ \sP_{\hat{\mathcal{P}}}^{N}\{V_{\hat{\mathcal{P}}}(\hat{x}_s(\sD_N)) > \epsilon \}\leq F_N(\epsilon), \; \forall \epsilon\in [0,1],  $$
    where $\sD_N=\{\delta_1,...,\delta_{N}\}$ denotes $N$  independent and identically distributed samples drawn from the nominal distribution $\hat{\mathcal{P}}$.
\elista
\end{assumption}
We are now in a position to provide a distributional robust result on the probability of violation of $\hat{\vx}_s(\sD_N)$ in the ambiguity set $\mathcal{A}$, which generalizes the result from~\cite{tsengRandomConvexApproximations2016a}.
\begin{lemma} \label{lemma:Bound:PN:with:FN}
Under Assumptions~\ref{ass:greater_zero} and~\ref{assumption:F} we have
 \begin{equation}\label{equ:FN:in:Lemma}
  \sP_{\hat{\mathcal{P}}}^{N} \{ V_{\mathcal{P}}(\hat{\vx}_s(\sD_N))>\epsilon  \}\leq F_N(\hat{\epsilon}_\mathcal{A}(\epsilon)), \; \forall \mathcal{P} \in \mathcal{A}.
    \end{equation}
\end{lemma}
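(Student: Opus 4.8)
The plan is to combine the sampling-based bound from Assumption~\ref{assumption:F}(ii) with the deterministic implication supplied by Lemma~\ref{lemma:PRL:VN}. The key observation is that the event $\{V_{\mathcal{P}}(\hat{\vx}_s(\sD_N)) > \epsilon\}$ is, by contraposition of Lemma~\ref{lemma:PRL:VN}, contained in the event $\{V_{\hat{\mathcal{P}}}(\hat{\vx}_s(\sD_N)) > \hat{\epsilon}_{\mathcal{A}}(\epsilon)\}$. Indeed, Lemma~\ref{lemma:PRL:VN} states that $V_{\hat{\mathcal{P}}}(x) \leq \hat{\epsilon}_{\mathcal{A}}(\epsilon)$ implies $V_{\mathcal{P}}(x) \leq \epsilon$ for every $\mathcal{P} \in \mathcal{A}$; taking the contrapositive, if $V_{\mathcal{P}}(x) > \epsilon$ for some $\mathcal{P} \in \mathcal{A}$, then necessarily $V_{\hat{\mathcal{P}}}(x) > \hat{\epsilon}_{\mathcal{A}}(\epsilon)$.

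First I would fix an arbitrary $\mathcal{P} \in \mathcal{A}$ and an arbitrary $\epsilon \in [0,1]$, and apply the above contrapositive pointwise with $x = \hat{\vx}_s(\sD_N)$ for each realization of the multi-sample $\sD_N \in \Delta^N$. This gives the set inclusion
$$ \{\sD_N \in \Delta^N : V_{\mathcal{P}}(\hat{\vx}_s(\sD_N)) > \epsilon\} \subseteq \{\sD_N \in \Delta^N : V_{\hat{\mathcal{P}}}(\hat{\vx}_s(\sD_N)) > \hat{\epsilon}_{\mathcal{A}}(\epsilon)\}. $$
Next I would apply the product measure $\sP_{\hat{\mathcal{P}}}^N$ to both sides, using monotonicity of probability, to get
$$ \sP_{\hat{\mathcal{P}}}^{N}\{V_{\mathcal{P}}(\hat{\vx}_s(\sD_N)) > \epsilon\} \leq \sP_{\hat{\mathcal{P}}}^{N}\{V_{\hat{\mathcal{P}}}(\hat{\vx}_s(\sD_N)) > \hat{\epsilon}_{\mathcal{A}}(\epsilon)\}. $$
Finally I would invoke Assumption~\ref{assumption:F}(ii) with the risk level $\hat{\epsilon}_{\mathcal{A}}(\epsilon)$ in place of $\epsilon$ — which is legitimate since $\hat{\epsilon}_{\mathcal{A}}: [0,1]\to[0,1]$ so $\hat{\epsilon}_{\mathcal{A}}(\epsilon) \in [0,1]$ — to bound the right-hand side by $F_N(\hat{\epsilon}_{\mathcal{A}}(\epsilon))$, yielding~\eqref{equ:FN:in:Lemma}. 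Since $\mathcal{P} \in \mathcal{A}$ was arbitrary, the bound holds for all $\mathcal{P} \in \mathcal{A}$.

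The argument is essentially a bookkeeping exercise, so there is no serious obstacle; the only point requiring a little care is that the set inclusion must hold for \emph{every} realization $\sD_N$ simultaneously (so that the measure can be applied), which is exactly what the pointwise, deterministic nature of Lemma~\ref{lemma:PRL:VN} guarantees — the lemma's implication depends only on the value $\hat{\vx}_s(\sD_N)$, not on how it was generated. One should also note that the randomness is still measured under $\sP_{\hat{\mathcal{P}}}^N$ on the left-hand side of~\eqref{equ:FN:in:Lemma}, since the samples generating $\hat{\vx}_s(\sD_N)$ are drawn from the nominal distribution; it is only the probability of violation $V_{\mathcal{P}}$ that is evaluated under the ambiguous $\mathcal{P}$. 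Measurability of the relevant events is inherited from Assumption~\ref{assumption:F}(ii), which presupposes it.
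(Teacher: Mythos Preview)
Your proposal is correct and follows essentially the same approach as the paper: both apply Lemma~\ref{lemma:PRL:VN} pointwise at $x=\hat{\vx}_s(\sD_N)$ to obtain the event inclusion (the paper phrases this via the implication on complementary events rather than an explicit contrapositive, but these are equivalent), then invoke Assumption~\ref{assumption:F}(ii) with $\hat{\epsilon}_{\mathcal{A}}(\epsilon)$ in place of $\epsilon$. Your additional remarks on measurability and on the sampling distribution being $\hat{\mathcal{P}}$ are accurate and do not diverge from the paper's argument.
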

\begin{proof}
From the first point of Assumption \ref{assumption:F}, we have that $\hat{\epsilon}_{\mathcal{A}}: [0,1]\to[0,1]$  is a  PRL for the ambiguity set $\mathcal{A}$ and the nominal distribution $\hat{\mathcal{P}}\in \mathcal{A}$. Thus, we obtain by a direct application of Lemma \ref{lemma:PRL:VN} that   $$ V_{\hat{\mathcal{P}}} (\hat{x}_s(\sD_N)) \leq \hat{\epsilon}_{\mathcal{A}} (\epsilon) \Rightarrow  V_{\mathcal{P}} (\hat{x}_s(\sD_N)) \leq  \epsilon, \; \forall \mathcal{P} \in \mathcal{A}. $$
This implies that, for every $\mathcal{P}\in \mathcal{A}$, 
 \begin{gather}
 \sP_{\hat{\mathcal{P}}}^{N}\{V_{\hat{\mathcal{P}}}(\hat{x}_s(\sD_N)) \leq  \hat{\epsilon}_{\mathcal{A}}(\epsilon) \}  \leq  \sP_{\hat{\mathcal{P}}}^{N} \{ V_{\mathcal{P}}(\hat{\vx}_s(\sD_N))\leq \epsilon  \}, \\
\label{ineq:sup:in:A} \sP_{\hat{\mathcal{P}}}^{N} \{ V_{\mathcal{P}}(\hat{\vx}_s(\sD_N)) >\epsilon  \} \leq \sP_{\hat{\mathcal{P}}}^{N}\{V_{\hat{\mathcal{P}}}(\hat{x}_s(\sD_N)) >  \hat{\epsilon}_{\mathcal{A}}(\epsilon) \} . 
\end{gather}
 From the second point of Assumption \ref{assumption:F} we have 
$$ \sP_{\hat{\mathcal{P}}}^{N}\{V_{\hat{\mathcal{P}}}(\hat{x}_s(\sD_N)) > \hat{\epsilon}_{\mathcal{A}}(\epsilon) \}\leq F_N(\hat{\epsilon}_{\mathcal{A}}(\epsilon) ). $$ 
This, along with \eqref{ineq:sup:in:A}, proves the claim of the lemma.
\end{proof}

Lemma~\ref{lemma:Bound:PN:with:FN} gives distributionally robust two-level probabilistic bounds on the violation probability with the desired risk level $\epsilon$ and the confidence $1-F_N(\hat{\epsilon}_{\mathcal{A}}(\epsilon) )$. In the following, we will present distributionally robust one-level probability results corresponding to the expected probability of violation.

\begin{definition}[Expected Probability of Violation] \label{def:Expected_violation_Prob}
Given the integer $N\geq 1$ and the operator $\hat{x}_s:\Delta ^N \to \mathbb{X}$, denote by $\E^N_{\hat{\mathcal{P}}}\{ V_\mathcal{P} (\hat{\vx}_s(\sD_N))\}$ the mean value of the random variable $V_{\mathcal{P}} (\hat{\vx}_s(\sD_N))$, where $\sD_N\in \Delta^N $ denotes an i.i.d. multi-sample drawn from the nominal distribution $\hat{\mathcal{P}}$. 
\end{definition}
The expected probability of violation corresponds to the violation probability of the solution $\hat{\vx}_s(\sD_N)$ on a new sample $\delta^{N+1}$ and gives a more intuitive understanding of the actual safety than the nested two-level guarantees~\cite{campiNotesScenarioDesign2009}.
\begin{lemma} \label{lemma:Bound:Expected:VN}
Under Assumptions~\ref{ass:greater_zero} and~\ref{assumption:F} we have 
$$\E^N_{\hat{\mathcal{P}}}\{ V_\mathcal{P} (\hat{\vx}_s(\sD_N))\} \leq \int_0^1 F_N (\hat{\epsilon}_\mathcal{A}(\epsilon))d\epsilon, \; \forall \mathcal{P} \in \mathcal{A}.$$
\end{lemma}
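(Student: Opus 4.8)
The plan is to combine the standard ``layer-cake'' representation of the expectation of a bounded random variable with the two-level bound already established in Lemma~\ref{lemma:Bound:PN:with:FN}. Fix $\mathcal{P}\in\mathcal{A}$. First I would note that $V_{\mathcal{P}}(\hat{\vx}_s(\sD_N))$ is a probability, hence a random variable (in the multi-sample $\sD_N$) taking values in $[0,1]$. For any such random variable $Y$ one has $Y=\int_0^1 I(Y-\epsilon)\,d\epsilon$ pointwise (because $I(Y-\epsilon)=1$ exactly when $\epsilon\le Y$, and $Y\le 1$), so taking expectations and exchanging $\E^N_{\hat{\mathcal{P}}}$ with the integral over $\epsilon$ — justified by Tonelli's theorem, since the integrand is nonnegative and jointly measurable in $(\sD_N,\epsilon)$ — gives
$$ \E^N_{\hat{\mathcal{P}}}\{ V_\mathcal{P} (\hat{\vx}_s(\sD_N))\} = \int_0^1 \sP_{\hat{\mathcal{P}}}^{N}\{ V_\mathcal{P}(\hat{\vx}_s(\sD_N)) > \epsilon \}\, d\epsilon, $$
where replacing $\ge$ by $>$ changes the integrand only on the at most countable set of atoms of $Y$, hence on a Lebesgue-null set, and so does not affect the integral.

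Next I would invoke Lemma~\ref{lemma:Bound:PN:with:FN}, which is applicable since Assumptions~\ref{ass:greater_zero} and~\ref{assumption:F} are in force: for every $\epsilon\in[0,1]$ and every $\mathcal{P}\in\mathcal{A}$,
$$ \sP_{\hat{\mathcal{P}}}^{N}\{ V_\mathcal{P}(\hat{\vx}_s(\sD_N)) > \epsilon \} \leq F_N(\hat{\epsilon}_{\mathcal{A}}(\epsilon)). $$
Integrating this pointwise-in-$\epsilon$ inequality over $[0,1]$ and using monotonicity of the Lebesgue integral yields
$$ \E^N_{\hat{\mathcal{P}}}\{ V_\mathcal{P} (\hat{\vx}_s(\sD_N))\} \leq \int_0^1 F_N(\hat{\epsilon}_{\mathcal{A}}(\epsilon))\, d\epsilon, \quad \forall \mathcal{P}\in\mathcal{A}, $$
which is the asserted bound. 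The right-hand side is automatically well defined and finite because $0\le F_N\le 1$, so no integrability hypothesis on $F_N$ is required.

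The substantive work is entirely contained in Lemma~\ref{lemma:Bound:PN:with:FN}; the only step needing care is the tail-integral identity. Concretely, one must use that $V_{\mathcal{P}}(\hat{\vx}_s(\sD_N))\in[0,1]$ so the outer integral may be truncated at $1$, and confirm joint measurability of $(\sD_N,\epsilon)\mapsto I(V_{\mathcal{P}}(\hat{\vx}_s(\sD_N))-\epsilon)$ so that Tonelli applies and expectation and integration may be swapped. Both are routine under the standing measurability conventions, and I do not expect any genuine obstacle beyond this bookkeeping.
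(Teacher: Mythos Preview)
Your proof is correct and rests on the same core step as the paper: integrate the pointwise two-level bound of Lemma~\ref{lemma:Bound:PN:with:FN} over $\epsilon\in[0,1]$. The difference is purely in packaging. The paper assumes the random variable $V_\mathcal{P}(\hat{\vx}_s(\sD_N))$ admits a density $f_\mathcal{P}$, writes the CDF as $\int_0^\epsilon f_\mathcal{P}(\tau)\,d\tau$, integrates the resulting inequality in $\epsilon$, and then applies Fubini to swap the order of integration---effectively re-deriving the layer-cake formula from scratch. You invoke the layer-cake identity directly, which is both shorter and slightly more general: your argument does not require the distribution of $V_\mathcal{P}(\hat{\vx}_s(\sD_N))$ to be absolutely continuous, whereas the paper's use of a density $f_\mathcal{P}$ is an unstated (and unnecessary) regularity assumption. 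Your care about the $\geq$ versus $>$ distinction and the Tonelli justification is appropriate and in fact makes your version more rigorous on these minor points.
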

\begin{proof}
    Denote by $f_{\mathcal{P}}(\epsilon)$ the density function of the random variable $V_\mathcal{P}(\hat{x}_s (\sD_N))$, where $\sD_N$ is drawn according to $\hat{\mathcal{P}} \in \mathcal{A}$. By definition, \begin{eqnarray*} \label{eq:density_definition}
    \int_0^\epsilon f_\mathcal{P} (\tau) d\tau & = & \sP_{\hat{\mathcal{P}}}^{N} \{ V_{\mathcal{P}}(\hat{\vx}_s(\sD_N)) \leq \epsilon  \} \\
    & = & 1 - \sP_{\hat{\mathcal{P}}}^{N} \{ V_{\mathcal{P}}(\hat{\vx}_s(\sD_N)) > \epsilon  \} 
    \end{eqnarray*}
    From Lemma \ref{lemma:Bound:PN:with:FN} we have $ \sP_{\hat{\mathcal{P}}}^{N} \{ V_{\mathcal{P}}(\hat{\vx}_s(\sD_N)) > \epsilon\} \leq F_N (\hat{\epsilon}_\mathcal{A}(\epsilon)) $ $\forall \gP \in \gA$. Thus, 
    $$ 
     \int_0^\epsilon f_\mathcal{P} (\tau) d\tau  
     \geq  1 - F_N (\hat{\epsilon}_\mathcal{A}(\epsilon)),\;  \forall \epsilon \in [0,1].$$
     Integrating this inequality and changing the order of integration via Fubini's theorem  gives
    \begin{align*}
        &\int_0^1 \left(\int_0^\epsilon f_\mathcal{P} (\tau) d\tau\right) d\epsilon  
     \geq  1 - \int_0^1 F_N (\hat{\epsilon}_\mathcal{A}(\epsilon))d\epsilon \\
     \Leftrightarrow & \int_0^1 \left(\int_0^1 I(\epsilon-\tau)f_\mathcal{P} (\tau) d\tau\right) d\epsilon  
     \geq  1 - \int_0^1 F_N (\hat{\epsilon}_\mathcal{A}(\epsilon))d\epsilon\\    
\Leftrightarrow& \int_0^1 \left(\int_0^1 I(\epsilon-\tau)f_\mathcal{P} (\tau) d\epsilon\right) d\tau  
     \geq  1 - \int_0^1 F_N (\hat{\epsilon}_\mathcal{A}(\epsilon))d\epsilon.
\end{align*}    
We note that 
$$ \int_0^1 I(\epsilon-\tau)f_\mathcal{P} (\tau) d\epsilon  =  f_\mathcal{P} (\tau)  \int_0^1 I(\epsilon-\tau)d\epsilon = f_\mathcal{P} (\tau)  (1- \tau).$$  
Thus, we can rewrite the inequality as 
$$ \int_0^1 (1-\tau) f_\mathcal{P} (\tau) d\tau  
     \geq  1 - \int_0^1 F_N (\hat{\epsilon}_\mathcal{A}(\epsilon))d\epsilon.$$
Rearranging the terms of the obtained expression we obtain
$ \E^N_{\hat{\mathcal{P}}}\{ V_\mathcal{P} (\hat{\vx}_s(\sD_N))\} =  \int_0^1 \tau f_\mathcal{P} (\tau) d\tau   \leq \int_0^1 F_N (\hat{\epsilon}_\mathcal{A}(\epsilon))d\epsilon.$
\end{proof}

\begin{corollary} \label{Corollary:Bound:Expected:ByParts}
Suppose that Assumptions~\ref{ass:greater_zero} and~\ref{assumption:F} hold and that $1-F_N (\hat{\epsilon}_\mathcal{A}(\epsilon))$ has a continuous derivative with respect to $\epsilon$ that we denote $\hat{f}_\mathcal{A}(\cdot)$. 
Then, 
$$\E^N_{\hat{\mathcal{P}}}\{ V_\mathcal{P} (\hat{\vx}_s(\sD_N))\} \leq F_N(\hat{\epsilon}_\mathcal{A}(1)) +  \int_0^1 \epsilon \hat{f}_\mathcal{A}(\epsilon) d\epsilon.$$
\end{corollary}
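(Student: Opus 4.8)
The plan is to start directly from Lemma~\ref{lemma:Bound:Expected:VN}, which already gives $\E^N_{\hat{\mathcal{P}}}\{ V_\mathcal{P} (\hat{\vx}_s(\sD_N))\} \leq \int_0^1 F_N (\hat{\epsilon}_\mathcal{A}(\epsilon))\, d\epsilon$ for every $\mathcal{P}\in\mathcal{A}$, and then to rewrite the right-hand integral via integration by parts. Concretely, I would introduce the auxiliary function $G(\epsilon) := 1 - F_N(\hat{\epsilon}_\mathcal{A}(\epsilon))$, so that, by the hypothesis of the corollary, $G$ has continuous derivative $G'(\epsilon) = \hat{f}_\mathcal{A}(\epsilon)$ on $[0,1]$, and $F_N(\hat{\epsilon}_\mathcal{A}(\epsilon)) = 1 - G(\epsilon)$.

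The key computation is then $\int_0^1 F_N(\hat{\epsilon}_\mathcal{A}(\epsilon))\,d\epsilon = \int_0^1 \bigl(1 - G(\epsilon)\bigr)\,d\epsilon = 1 - \int_0^1 G(\epsilon)\,d\epsilon$, followed by integration by parts on the last integral with $u = G(\epsilon)$ and $dv = d\epsilon$ (so $v = \epsilon$): $\int_0^1 G(\epsilon)\,d\epsilon = \bigl[\epsilon\, G(\epsilon)\bigr]_0^1 - \int_0^1 \epsilon\, \hat{f}_\mathcal{A}(\epsilon)\,d\epsilon = G(1) - \int_0^1 \epsilon\, \hat{f}_\mathcal{A}(\epsilon)\,d\epsilon$. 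Since $G(1) = 1 - F_N(\hat{\epsilon}_\mathcal{A}(1))$, collecting terms gives $\int_0^1 F_N(\hat{\epsilon}_\mathcal{A}(\epsilon))\,d\epsilon = F_N(\hat{\epsilon}_\mathcal{A}(1)) + \int_0^1 \epsilon\, \hat{f}_\mathcal{A}(\epsilon)\,d\epsilon$, and chaining this with the bound from Lemma~\ref{lemma:Bound:Expected:VN} yields the claim for every $\mathcal{P}\in\mathcal{A}$.

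There is essentially no hard step here: the only point requiring care is that the integration-by-parts step be legitimate, which is precisely what the regularity assumption (that $1 - F_N(\hat{\epsilon}_\mathcal{A}(\epsilon))$ has a continuous derivative $\hat{f}_\mathcal{A}$ on $[0,1]$) is there to guarantee — continuity of $\hat{f}_\mathcal{A}$ makes it Riemann integrable and makes the fundamental theorem of calculus applicable on the closed interval, so no further justification of the boundary terms is needed. It is worth noting in passing that the corollary is merely a restatement of Lemma~\ref{lemma:Bound:Expected:VN} in a form that is often more convenient in practice, since in many randomized-method settings $F_N$ (and hence $1 - F_N\circ\hat{\epsilon}_\mathcal{A}$) is smooth while $\hat{\epsilon}_\mathcal{A}$ admits a closed form, e.g.\ $\hat{\epsilon}_{M_{\text{RVD}}}(\epsilon) = \epsilon / M_{\text{RVD}}$ from~\eqref{eq:PRL_RVD}.
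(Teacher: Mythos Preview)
Your proof is correct and follows essentially the same approach as the paper: apply Lemma~\ref{lemma:Bound:Expected:VN} and then rewrite $\int_0^1 F_N(\hat{\epsilon}_\mathcal{A}(\epsilon))\,d\epsilon$ via integration by parts, using that $\frac{d}{d\epsilon}F_N(\hat{\epsilon}_\mathcal{A}(\epsilon)) = -\hat{f}_\mathcal{A}(\epsilon)$. The only cosmetic difference is that the paper integrates $F_N(\hat{\epsilon}_\mathcal{A}(\epsilon))$ by parts directly, whereas you route through the auxiliary $G(\epsilon)=1-F_N(\hat{\epsilon}_\mathcal{A}(\epsilon))$; the computations are identical.
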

\begin{proof}
Since both $\epsilon$ and $F_N(\hat{\epsilon}_\mathcal{A}(\epsilon))$ have continuous derivatives with respect to $\epsilon$,  integration by parts gives
\begin{eqnarray*} \int_0^1 F_N(\hat{\epsilon}_\mathcal{A}(\epsilon))d\epsilon &=&  \left[ \epsilon F_N(\hat{\epsilon}_\mathcal{A}(\epsilon)) \right]_0^1 -\int_0^1 \epsilon \frac{d}{d\epsilon}F_N(\hat{\epsilon}_\mathcal{A}(\epsilon)) d\epsilon \\
&=&  F_N(\hat{\epsilon}_\mathcal{A}(1)) +  \int_0^1 \epsilon \hat{f}_\mathcal{A}(\epsilon) d\epsilon. 
\end{eqnarray*}
The claim now follows from the application of Lemma~\ref{lemma:Bound:Expected:VN}.
\end{proof}
Corollary~\ref{Corollary:Bound:Expected:ByParts} recovers the interpretation of the expected probability of violation as the first moment of $V_\mathcal{P} (\hat{\vx}_s(\sD_N))$.

\OLD{\subsection{Distributionally robust convex chance constraint with the scenario approach}
One framework to give probabilistic guarantees on the violation probability in accordance with Assumption~\ref{assumption:F}(ii) is the scenario approach~\cite{calafioreScenarioApproachRobust2006,campiScenarioApproachSystems2009,campiIntroductionScenarioApproach2018}.
The scenario approach gives probabilistic guarantees on the chance constraint violation probability of the optimal solution $\hat{\vx}_{SA}(\sD_N)$ of~\eqref{eq:RSP_for_SA}, the scenario counterpart of~\eqref{eq:CCP}, for convex constraints and cost function.
Depending on $N$, the probability $V_{\hat{\mathcal{P}}}(\hat{\vx}_{SA}(\sD_N))=\sP_{\hat{\mathcal{P}}}\{\delta\in\Delta: g_{\delta}(\hat{\vx}_{SA}(\sD_N)))> 0\}$ 
can be upper bounded by $\epsilon$ with a certainty of $1-\beta=1-F_N^{\text{SA}}(\epsilon)$.
This result assumes the existence and uniqueness of the solution~\cite{campiIntroductionScenarioApproach2018}.
\begin{assumption} \label{ass:scen_approach}
    For every sample size $N\geq \dim{\vx}$ and for every sample $\sD_N=\{\delta_1,...,\delta_{N}\}$ the solution of program~\eqref{eq:RSP_for_SA} exists and is unique. Also, the program~\eqref{eq:RSP_for_SA} is convex in $\vx$. 
\end{assumption}
The scenario approach (see Theorem 3.7 in~\cite{campiIntroductionScenarioApproach2018} and Theorem 1 in~\cite{campiExactFeasibilityRandomized2008}) assures these bounds, when $N$ is chosen to satisfy
\begin{equation} \label{eq:bound_scen}
    \sP_{\hat{\mathcal{P}}}^{N}\{V_{\hat{\mathcal{P}}}(\hat{\vx}_{SA}(\sD_N)) > \epsilon \}\leq \sum_{i=0}^{d-1} f_B(N,i,\epsilon) =F_N^{\text{SA}}(\epsilon)= \beta,
\end{equation}
where $d=\dim{\vx}$
and $f_B(N,l,\alpha)$ denotes the probability mass function of the binomial distribution for $l$ successes in $N$ independent trials with the rate of success $\alpha$
\begin{align} \label{eq:binom}
    f_B(N,l,\alpha)=\binom{N}{l}\alpha^l(1-\alpha)^{N-l}.
\end{align}
For the proof and for more information about the scenario approach, see~\cite{campiIntroductionScenarioApproach2018}.
The bound in~\eqref{eq:bound_scen} shows that the violation probability $V_{\mathcal{P}}(\hat{\vx}_{SA}(\sD_N))=\sP\{\delta\in\Delta : g_{\delta}(\hat{\vx}_{SA}(\sD_N))> 0\}$ is upper bounded by a beta distribution with $d$ and $N-d+1$ degrees of freedom with the probability density
\begin{equation} \label{eq:SA_density}
    \frac{\partial (1-F_N^{\text{SA}}(\epsilon))}{\partial \epsilon}=f^{\text{SA}}_{\hat{\mathcal{P}}}(\epsilon)=d\binom{N}{d}\epsilon^{d-1}(1-\epsilon)^{N-d}.
\end{equation}
This probability density function describes the distribution of the actual violation probability of a newly sampled solution $\hat{\vx}_{SA}(\sD_N)$ and is useful to derive one-level probability results~\cite{campiNotesScenarioDesign2009}. 
The bound in~\eqref{eq:bound_scen} can be used to calculate an upper bound on $N$, as done in~\cite{alamoRandomizedMethodsDesign2015}.
One difficulty arising with the scenario approach is that to obtain guarantees for low probabilities of violation, many samples are necessary, making the optimal solution of the sampled program conservative.
To remedy this conservatism, a sampling and discarding approach was proposed in~\cite{campiSamplingandDiscardingApproachChanceConstrained2011, calafioreRandomConvexPrograms2010}.
The derived guarantees are structurally similar to the scenario approach, but include as additional parameter the number of discarded constraints $k$. All of the $k$ discarded constraints are necessitated to be violated by the solution $\hat{\vx}_{SA,k}(\sD_N)$ of the scenario program without the discarded samples. That is, under Assumption~\ref{ass:scen_approach}, one obtains (see e.g. Theorem 3.9 in~\cite{campiIntroductionScenarioApproach2018})
\begin{multline} \label{eq:bound_scen_CR}
    \sP_{\hat{\mathcal{P}}}^{N}\{V_{\hat{\mathcal{P}}}(\hat{\vx}_{SA,k}(\sD_N)) > \epsilon \}\\
    \leq \binom{k+d-1}{k} \sum_{i=0}^{d+k-1} f_B(N, i, \epsilon) = F_N^{\text{SA},k}(\epsilon).
\end{multline}
Following Lemma~\ref{lemma:Bound:PN:with:FN}, the results of the scenario approach can be made distributionally robust by replacing $\epsilon$ in~\eqref{eq:bound_scen} and~\eqref{eq:bound_scen_CR} with the PRL $\hat{\epsilon}_{\mathcal{A}}(\epsilon)$ corresponding to the used discrepancy functional, as it has been already proposed~\cite{tsengRandomConvexApproximations2016a,erdoganAmbiguousChanceConstrained2006, madhusudanaraoLearningAmbiguousChance2023a}
\begin{equation} \label{eq:Distributionally_robust_Bound_SA}
    \sP_{\hat{\mathcal{P}}}^{N} \{\max_{\mathcal{P}\in\mathcal{A}} V_{\mathcal{P}}(\hat{\vx}^*)>\epsilon  \}\\
    \leq\ \sum_{i=0}^{d-1} f_B(N,i,\hat{\epsilon}_{\mathcal{A}})=F_N^{\text{SA}}(\hat{\epsilon}_{\mathcal{A}}(\epsilon)).
\end{equation}
To our knowledge, it has not been discussed in the literature that the application of the PRLs is also possible for the scenario approach with constraint removal, which we present as an additional contribution next.
\begin{result}
    Consider problem~\eqref{eq:RSP_for_SA} under Assumption~\ref{ass:scen_approach}, where the sample set $\sD_N$ has been drawn according to $\hat{\mathcal{P}}$ with $k$ of the $N_{\text{samp}}$ sampled constraints removed, which are all violated by $\hat{\vx}_{SA,k}(\sD_N)$. Assuming that the real uncertainty lies within an ambiguity set $\mathcal{A}$, for which the PRL $\hat{\epsilon}_{\mathcal{A}}$ exists for a given risk level $\epsilon$, and  defining $\beta$ as
    \begin{equation} \label{eq:Distributionally_robust_Bound_CR}
    \beta
    = \binom{k+d-1}{k} \sum_{i=0}^{d+k-1} f_B(N, i, \hat{\epsilon}_{\mathcal{A}}(\epsilon)),
\end{equation}
    one has that the inequality $V_{\mathcal{P}}(\hat{\vx}_{SA,k}(\sD_N))\leq\epsilon$ is satisfied with a probability no smaller than $1-\beta,\ \forall \mathcal{P}\in\mathcal{A}$. In other words,
    \begin{align}
  \sP_{\hat{\mathcal{P}}}^{N} \{\max_{\mathcal{P}\in\mathcal{A}} V_{\mathcal{P}}(\hat{\vx}_k^*)>\epsilon  \}\leq \beta.
    \end{align}
\end{result}
\begin{proof}
    The proof is the straightforward application of Lemma~\ref{lemma:Bound:PN:with:FN} with $\beta=F_N^{\text{SA},k}(\hat{\epsilon}_{\mathcal{A}}(\epsilon).$
\end{proof}
In~\cite{campiNotesScenarioDesign2009}, a one level probability result for the scenario approach was presented.
This one level result corresponds to the 
expected violation probability (see Definition~\ref{def:Expected_violation_Prob}).
The expected violation probability can be upper bounded by the mean of the beta distribution with $d$ and $N-d+1$ degrees of freedom~\cite{campiNotesScenarioDesign2009,campiIntroductionScenarioApproach2018}
\begin{equation} \label{eq:mean_eps_scen_app}
    \E^N_{\hat{\mathcal{P}}}\{ V_{\hat{\mathcal{P}} }(\hat{\vx}_s(\sD_N))\}  \leq \int_0^1 \epsilon f^{\text{SA}}_{\hat{\mathcal{P}}}(\epsilon) d\epsilon = \frac{d}{N+1}.
\end{equation}
For the distributionally robust result, this can be adapted analog to Lemma~\ref{lemma:Bound:Expected:VN}. For the RVD, we show specifically:
\begin{theorem} \label{thm:Expected_RVD}
     Consider problem~\eqref{eq:RSP_for_SA} under assumption~\ref{ass:scen_approach}, where the sample set $\sD_N$ has been drawn according to $\hat{\mathcal{P}}$ with the optimal solution $\hat{\vx}_{SA}(\sD_N)$. Assuming that the real uncertainty lies within an ambiguity set $\mathcal{A}$, bounded by the RVD with radius ${M_{\text{RVD}}}$, the expected probability of violation is
     \begin{multline} \label{eq:thm:expected}
    \E^N_{\hat{\mathcal{P}}}\{ V_\mathcal{P} (\hat{\vx}_s(\sD_N))\} \leq \sum_{i=d}^{N} f_B(N,i, \frac{1}{M_{\text{RVD}}}) \frac{d}{i+1}\\
     +\sum_{i=0}^{d-1} f_B(N,i,M_{\text{RVD}}^{-1}).
     \end{multline}
\end{theorem}
\begin{proof}
    To calculate the worst case mean of $V_{\mathcal{P}}(\hat{\vx}_s(\sD_N))$ for ambiguity measured by the RVD, we use Corollary~\ref{Corollary:Bound:Expected:ByParts}.
    $1-F_N^{\text{SA}} (\hat{\epsilon}_{M_{\text{RVD}}}(\epsilon))$ is continuously differentiable with the corresponding function derived with~\eqref{eq:SA_density} and the chain rule:
    \begin{equation*}
        f^{\text{SA}}_{\text{RVD}}(\epsilon)=\frac{d}{M_{\text{RVD}}}\binom{N}{d}\left( \frac{\epsilon}{M_{\text{RVD}}}\right)^{d-1}(1-\frac{\epsilon}{M_{\text{RVD}}})^{N-d}.
    \end{equation*}
    As also $F_N^{\text{SA}} (\hat{\epsilon}_{M_{\text{RVD}}}(1))=F_N^{\text{SA}}(M_{\text{RVD}}^{-1})$ the following holds:
    \begin{multline*} \label{eq:mean_eps_scen_app_DR_RVD}\max_{\mathcal{P}\in\mathcal{A}}\E_{\mathcal{P}}\left[V_{\mathcal{P}}(\hat{\vx}_s(\sD_N))\right] \leq\int_0^1 \epsilon f^{\text{SA}}_{\text{RVD}}(\epsilon) d\epsilon +\sum_{i=0}^{d-1} f_B(N,i,M_{\text{RVD}}^{-1}) .
    \end{multline*}
Reformulating the integral leads to
\begin{multline}
    \int_0^1 d \binom{N}{d} (\frac{\epsilon}{M_{\text{RVD}}}) ^d \left(1-\frac{\epsilon}{M_{\text{RVD}}} \right)^{N-d} d\epsilon\\
    = \int_0^1  d \binom{N}{d} (\frac{\epsilon}{M_{\text{RVD}}})^d  \left((1-\frac{1}{M_{\text{RVD}}}) +\frac{1}{M_{\text{RVD}}} (1-\eps) \right)^{N-d} d\epsilon \\
    = \int_0^1  \sum_{i=0}^{N-d} d \binom{N}{d} \binom{N-d}{i} \frac{\epsilon^{d}}{M_{\text{RVD}}^{i+d}} (1-\epsilon)^{i}(1-\frac{1}{M_{\text{RVD}}})^{N-d-i} d\epsilon\\
    = \int_0^1 \sum_{i=d}^{N} \binom{N}{i} \frac{1}{M_{\text{RVD}}^i}(1-\frac{1}{M_{\text{RVD}}})^{N-i} \epsilon d\binom{i}{d}\epsilon^{d-1}(1-\epsilon)^{i-d}   d\epsilon\\
    = \sum_{i=d}^{N} \binom{N}{i} \frac{1}{M_{\text{RVD}}^i} (1-\frac{1}{M_{\text{RVD}}})^{N-i} \int_0^1 \epsilon f^{\text{SA}}_{\hat{\mathcal{P}}}(\epsilon)   d\epsilon\\
    =\sum_{i=d}^{N} f_B(N,i, \frac{1}{M_{\text{RVD}}}) \frac{d}{i+1}.
\end{multline}
The second equality is a reformulation of the last bracket, for which the third equality applies the binomial formula. The fourth equality is derived via an index shift, while for the last equalities~\eqref{eq:binom} and~\eqref{eq:mean_eps_scen_app} are used respectively. Using the reformulated integral, we obtain~\eqref{eq:thm:expected}.
\end{proof}
Theorem~\ref{thm:Expected_RVD} gives an analytical solution to the distributionally robust bounds on the expected probability of violation for ambiguity measured by the RVD. Similar to the RVD being able to produce feasible two-level results even for low risk levels, the corresponding one-level bound can be interpreted as a weighted mean of~\eqref{eq:mean_eps_scen_app} for sample sizes $d\leq \hat{N}\leq N$ plus a constant for $\hat{N}<d$.}

\section{Distributionally robust randomized MPC} \label{sec:Case_Study}
In this section, we apply Lemma~\ref{lemma:Bound:PN:with:FN} and~\ref{lemma:Bound:Expected:VN} to the scenario approach~\cite{calafioreScenarioApproachRobust2006,campiScenarioApproachSystems2009,campiIntroductionScenarioApproach2018} in the context of randomized MPC (rMPC)~\cite{schildbachRandomizedModelPredictive2012} under distributional ambiguity described with the RVD.
Under the assumptions of convexity and the existence and uniqueness of a solution of~\eqref{eq:RSP_for_SA}, the scenario approach gives the following formula for $F_N(\eps)$ of Assumption~\ref{assumption:F} (ii)
(see Theorem 3.7 in~\cite{campiIntroductionScenarioApproach2018})
\begin{equation} \label{eq:bound_scen}
    \sP_{\hat{\mathcal{P}}}^{N}\{V_{\hat{\mathcal{P}}}(\hat{\vx}_{SA}(\sD_N)) > \epsilon \}\leq \sum_{i=0}^{d-1} f_B(N,i,\epsilon) =F_N^{\text{SA}}(\epsilon),
\end{equation}
where $d\leq\dim{\vx}$ is the number of support constraints
and $f_B(N,l,\alpha)$ denotes the probability mass function of the binomial distribution 
$    f_B(N,l,\alpha)=\binom{N}{l}\alpha^l(1-\alpha)^{N-l}$.
Formula~\eqref{eq:bound_scen} can be used in Lemma~\ref{lemma:Bound:PN:with:FN} and~\ref{lemma:Bound:Expected:VN} to achieve distributionally robust one- and two-level guarantees for the scenario approach.
The bound in~\eqref{eq:bound_scen} shows that the violation probability $V_{\mathcal{P}}(\hat{\vx}_{SA}(\sD_N))=\sP\{\delta\in\Delta : g_{\delta}(\hat{\vx}_{SA}(\sD_N))> 0\}$ is upper bounded by a beta distribution with $d$ and $N-d+1$ degrees of freedom with the probability density
\begin{equation} \label{eq:SA_density}
    \frac{\partial (1-F_N^{\text{SA}}(\epsilon))}{\partial \epsilon}=f^{\text{SA}}_{\hat{\mathcal{P}}}(\epsilon)=d\binom{N}{d}\epsilon^{d-1}(1-\epsilon)^{N-d}.
\end{equation}

In the context of rMPC, the number of support constraints $d$ is upper bounded by $N_{\text{pred}} n_u$, where $n_u$ is the input dimension and $N_{\text{pred}}$ the prediction horizon of the rMPC~\cite{schildbachRandomizedModelPredictive2012}.

We apply the rMPC for a double-integrator system under linear feedback and time-invariant additive disturbance
\begin{gather*}
    \vx_{k+1}=f_{\mK}(\vx_k,\vc_k)+\vw=(\mA+\mB\mK)\vx_k+\mB \vc_k +\vw,\\
    \mA = \begin{bmatrix}
        1 & 1\\
        0 & 1
    \end{bmatrix}, \mB=\begin{bmatrix}
        0.5\\
        1
    \end{bmatrix}, \mK = \begin{bmatrix}
        -0.43 & -1.03
    \end{bmatrix},
\end{gather*}
with $\vx_k\in \sR^2$, $\vu_k=\mK\vx_k+\vc_k \in\sR$ and we assume a uniform nominal uncertainty $\hat{\vw}\in\sR^2$ with $\hat{w}\sim \hat{\gP}=\gU_{\left[-0.2,0.2\right]^2}$.
The constraints for states $-0.5\leq \vx_{k,i} \leq 2 ,\ i=1,2$ and the input $-1\leq\vu_k\leq1$ are all considered as a joint chance constraint, i.e. the violation probability describes the probability that at least one constraint is violated.
We choose a prediction horizon of $N_{\text{pred}}=2$ with an objective $J(\vx,\vu)=\vx_{\left[0:N_{\text{pred}}+1\right]}^{\intercal}\vx_{\left[0:N_{\text{pred}}+1\right]}+\vc_{\left[0:N_{\text{pred}}\right]}^{\intercal}\vc_{\left[0:N_{\text{pred}}\right]}$ to consider prediction trajectories that are always close to the constraints, leading to a better illustration of the obtained probabilistic guarantees. %

We introduce distributional ambiguity, 
by assuming that the real uncertainty follows a different distribution $\vw\sim \gP$:
\begin{equation*}
    f_{\gP}(\vw)=\begin{cases}
        25 &\text{, if }0.1<|\vw_i|\leq 0.2, \forall i=1,2\\
        0 &\text{, else},
    \end{cases}
\end{equation*}
which is uniformly distributed at the boundaries of the uncertainty support to estimate a worst-case distribution.
An ambiguity set described by the RVD of size $M_{\text{RVD}}=4$ includes this distribution (see the inner plot of Figure~\ref{fig:viol_prob}).
The total variation distance for this example is $M_{\text{TVD}}=0.75$.

To validate the distributionally robust one- and two-level guarantees, we apply the input trajectory of an rMPC with $N=1000$ samples from $\hat{\mathcal{P}}$ in open-loop fashion on 40000 trajectories of length $N_{\text{pred}}$, where $\vw\sim \gP$ to estimate the violation probability of this specific solution.
As the violation probability is a random variable itself, we iterate this process $800$ times to create a histogram of the violation probability shown in Figure~\ref{fig:viol_prob} in blue and compare it to the nominal case without distributional ambiguity (orange).
\begin{figure}
    \centering
    \includegraphics[width=0.45\textwidth]{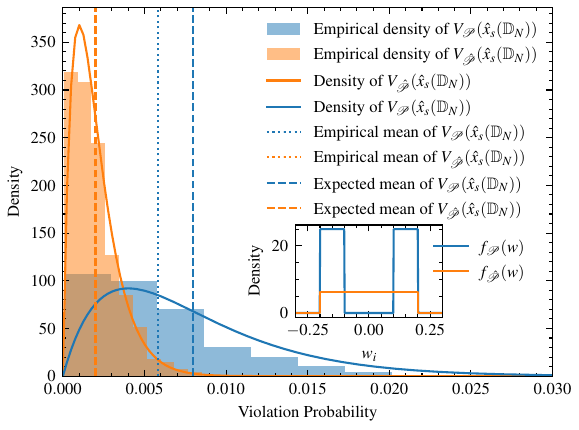}
    \caption{Empirical and analytical density of the violation probability in the rMPC case-study for the case with distributional ambiguity ($V_\mathcal{P} (\hat{\vx}_s(\sD_N))$) and the nominal case ($V_{\hat{\mathcal{P}}} (\hat{\vx}_s(\sD_N))$). The inner plot shows the density functions for the distributions $\hat{\gP}$ and $\gP$.}
    \label{fig:viol_prob}
    \vspace{-3ex}
\end{figure}
The orange density is given by~\eqref{eq:SA_density} and the nominal expected violation probability is given as in~\cite{campiNotesScenarioDesign2009}
\begin{equation} \label{eq:mean_eps_scen_app}
    \E^N_{\hat{\mathcal{P}}}\{ V_{\hat{\mathcal{P}} }(\hat{\vx}_s(\sD_N))\}  \leq \int_0^1 \epsilon f^{\text{SA}}_{\hat{\mathcal{P}}}(\epsilon) d\epsilon = \frac{d}{N+1}.
\end{equation}
Lemma~\ref{lemma:Bound:PN:with:FN} gives the cumulative distribution of the violation probability as $F_N^{\text{SA}}(\hat{\epsilon}_{M_{\text{RVD}}}(\epsilon))$ with~\eqref{eq:bound_scen}.
To calculate the worst-case mean of $V_{\mathcal{P}}(\hat{\vx}_s(\sD_N))$ for ambiguity measured by the RVD, we use Corollary~\ref{Corollary:Bound:Expected:ByParts}.
$1-F_N^{\text{SA}} (\hat{\epsilon}_{M_{\text{RVD}}}(\epsilon))$ is continuously differentiable with the corresponding function for the density derived with~\eqref{eq:SA_density} and the chain rule:
\begin{equation*}
    f^{\text{SA}}_{\text{RVD}}(\epsilon)=\frac{d}{M_{\text{RVD}}}\binom{N}{d}\left( \frac{\epsilon}{M_{\text{RVD}}}\right)^{d-1}(1-\frac{\epsilon}{M_{\text{RVD}}})^{N-d},
\end{equation*}
which gives the blue line in Figure~\ref{fig:viol_prob}.
As also $F_N^{\text{SA}} (\hat{\epsilon}_{M_{\text{RVD}}}(1))=F_N^{\text{SA}}(M_{\text{RVD}}^{-1})$ the following holds by solving the integral in Corollary~\ref{Corollary:Bound:Expected:ByParts} (for details see\Footnote{$^{\text{\ref{arxiv}}}$}\PROOFS{Sec.~\ref{sec:supplementary%
}})
\begin{multline*} %
    \E^N_{\hat{\mathcal{P}}}\{ V_\mathcal{P} (\hat{\vx}_s(\sD_N))\} \leq \sum_{i=d}^{N} f_B(N,i, \frac{1}{M_{\text{RVD}}}) \frac{d}{i+1}\\%
     +\sum_{i=0}^{d-1} f_B(N,i,M_{\text{RVD}}^{-1}),
    \end{multline*}
giving the dotted blue line in Figure~\ref{fig:viol_prob}.
The nominal histogram and empirical mean violation probability match the analytical solution, while the distributionally robust bounds slightly overestimate both. The RVD density closely follows the histogram, whereas the total variation distance would shift the nominal distribution to higher violations by 
$M_{\text{TVD}}$, making the lowest guaranteeable probability 0.75. This highlights the accuracy of the PRL with the RVD even at low risk levels.
\vspace{-1ex}
\section{Conclusion} \label{sec:Conclusion}
We propose the relative variation distance as a means to define perturbed risk levels, enabling consistent rescaling of nominal risk levels even at low violation probabilities where other discrepancy measures are overly conservative. We established theoretical guarantees for randomized solutions of chance constraints and validated them in a model predictive control case study. These results demonstrate the relative variation distance as an effective and practical tool for distributionally robust design in safety-critical settings.
\bibliographystyle{IEEEtran}
\bibliography{IEEEabrv,PhD_small_letters}

\PROOFS{
\newpage
\section{Supplementary material} \label{sec:supplementary}
\subsection{Proof of Proposition~\ref{prop:normal}}
\begin{proof}
    The RVD can be calculated as the maximum of
\begin{multline*}
    q(\delta)=\sqrt{\frac{\det{\hat{\Sigma}}}{\det{\Sigma}} }\exp (-\frac{1}{2}((\delta -\mu)^{\intercal}\Sigma^{-1}(\delta -\mu) \\
    -(\delta -\hat{\mu})^{\intercal}\hat{\Sigma}^{-1}(\delta -\hat{\mu}))),
\end{multline*}
as in~\eqref{eq:RVD_differential}.
For this, we take the first derivative of $q(\delta)$ with respect to $\delta$
\begin{equation} \label{eq:nabla_q_RVD_gauss}
    \nabla_{\delta}  q (\delta)=\left(\hat{\Sigma}^{-1}(\delta-\hat{\mu})-\Sigma^{-1}(\delta-\mu) \right) q(\delta),
\end{equation}
which is zero for
    \begin{align*}
        \delta_{\text{max}}=\left[\hat{\Sigma}^{-1} - \Sigma^{-1} \right]^{-1} \left[\hat{\Sigma}^{-1}\hat{\mu} - \Sigma^{-1} \mu \right],
    \end{align*}
because the first factor of~\eqref{eq:nabla_q_RVD_gauss} is zero.
To verify that $\delta_{\text{max}}$ is indeed a maximum, the hessian $\nabla^2{\delta} q$ is calculated
\begin{multline*}
    \nabla^2_{\delta} q(\delta)= \\
    \left(\hat{\Sigma}^{-1} -\Sigma^{-1}
    +\left(\hat{\Sigma}^{-1}(\delta-\hat{\mu})-\Sigma^{-1}(\delta-\mu)\right)^2\right) q(\delta)
\end{multline*}
If $\delta=\delta_{\text{max}}$ is plugged in, the bracket which was zero for the first derivative and appears squared in $\nabla^2 q$ will also be zero. So the only thing relevant for $\nabla^2 q(\delta_{\text{max}})\preceq0$ is
\begin{align*}
    \Sigma^{-1}-\hat{\Sigma}^{-1}\succ 0.
\end{align*}
Therefore $\hat{\Sigma}^{-1}\preceq\Sigma^{-1}$ is sufficient for the existence of a finite $M_{\text{RVD}}$ for two multivariate normal distributions.
The corresponding RVD, as shown in~\eqref{eq:M_RVD_Gaussian} can be calculated as $M_{\text{RVD}}=q(\delta_{\text{max}})$.\\
In the one-dimensional case 
\begin{align*}
    M_{\text{RVD}}=\frac{\hat{\sigma}}{\sigma} \exp{\left( \frac{1}{2}\frac{\left(\hat{\mu}-\mu \right)^2}{\hat{\sigma}^2-\sigma^2}\right) }.
\end{align*}
\end{proof}
\subsection{Derivation of the expected violation probability of the scenario approach under the RVD}
\begin{proposition} \label{thm:Expected_RVD}
     Consider problem~\eqref{eq:RSP_for_SA} under assumption of convexity, uniqueness and existence of a solution, where the sample set $\sD_N$ has been drawn according to $\hat{\mathcal{P}}$ with the optimal solution $\hat{\vx}_{SA}(\sD_N)$. Assuming that the real uncertainty lies within an ambiguity set $\mathcal{A}$, bounded by the RVD with radius ${M_{\text{RVD}}}$, the expected probability of violation is
     \begin{multline} \label{eq:thm:expected}
    \E^N_{\hat{\mathcal{P}}}\{ V_\mathcal{P} (\hat{\vx}_s(\sD_N))\} \leq \sum_{i=d}^{N} f_B(N,i, \frac{1}{M_{\text{RVD}}}) \frac{d}{i+1}\\
     +\sum_{i=0}^{d-1} f_B(N,i,M_{\text{RVD}}^{-1}).
     \end{multline}
\end{proposition}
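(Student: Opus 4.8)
\emph{Proof proposal.} The statement is a concrete instantiation of Corollary~\ref{Corollary:Bound:Expected:ByParts}, so the plan is to check its hypotheses and then grind the resulting integral into the claimed binomial sum. First I would observe that, under convexity, existence and uniqueness of a solution of~\eqref{eq:RSP_for_SA}, the scenario-approach bound~\eqref{eq:bound_scen} supplies the function $F_N = F_N^{\text{SA}}$ required by Assumption~\ref{assumption:F}(ii), while the RVD ambiguity set of radius $M_{\text{RVD}}$ has the explicit PRL $\hat{\epsilon}_{M_{\text{RVD}}}(\epsilon) = \epsilon/M_{\text{RVD}}$ from~\eqref{eq:PRL_RVD}, which is Assumption~\ref{assumption:F}(i). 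Since $F_N^{\text{SA}}$ is a polynomial in its argument and $\epsilon \mapsto \epsilon/M_{\text{RVD}}$ is affine, the composition $1 - F_N^{\text{SA}}(\hat{\epsilon}_{M_{\text{RVD}}}(\epsilon))$ is continuously differentiable on $[0,1]$, with derivative $\hat{f}_{\mathcal{A}}(\epsilon) = f^{\text{SA}}_{\text{RVD}}(\epsilon) = \tfrac{d}{M_{\text{RVD}}}\binom{N}{d}\big(\tfrac{\epsilon}{M_{\text{RVD}}}\big)^{d-1}\big(1-\tfrac{\epsilon}{M_{\text{RVD}}}\big)^{N-d}$ by the chain rule (as already noted in the text). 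Thus Corollary~\ref{Corollary:Bound:Expected:ByParts} applies and gives
$$\E^N_{\hat{\mathcal{P}}}\{ V_\mathcal{P} (\hat{\vx}_s(\sD_N))\} \leq F_N^{\text{SA}}(M_{\text{RVD}}^{-1}) + \int_0^1 \epsilon\, f^{\text{SA}}_{\text{RVD}}(\epsilon)\, d\epsilon,$$
and the first term equals $\sum_{i=0}^{d-1} f_B(N,i,M_{\text{RVD}}^{-1})$ directly by~\eqref{eq:bound_scen} evaluated at $M_{\text{RVD}}^{-1}$.

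It then remains to show the integral equals $\sum_{i=d}^{N} f_B(N,i,M_{\text{RVD}}^{-1})\tfrac{d}{i+1}$. I would substitute $u = \epsilon/M_{\text{RVD}}$, so that the integral becomes $M_{\text{RVD}}\int_0^{1/M_{\text{RVD}}} u\, f^{\text{SA}}_{\hat{\mathcal{P}}}(u)\, du$ with $f^{\text{SA}}_{\hat{\mathcal{P}}}$ the beta density from~\eqref{eq:SA_density}; then $u\, f^{\text{SA}}_{\hat{\mathcal{P}}}(u) = d\binom{N}{d} u^{d}(1-u)^{N-d}$, reducing everything to the partial moment $\int_0^{p} u^{d}(1-u)^{N-d}\, du$ with $p = 1/M_{\text{RVD}}$. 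Using the standard identity linking the incomplete beta function to the binomial tail, $\int_0^{p} u^{d}(1-u)^{N-d}\,du = \tfrac{d!\,(N-d)!}{(N+1)!}\sum_{j=d+1}^{N+1}\binom{N+1}{j}p^{j}(1-p)^{N+1-j}$, and then collapsing $\binom{N}{d}\tfrac{d!(N-d)!}{(N+1)!} = \tfrac{1}{N+1}$ and re-indexing via $i = j-1$ together with $\binom{N+1}{i+1} = \tfrac{N+1}{i+1}\binom{N}{i}$, the prefactors telescope to yield exactly $\sum_{i=d}^{N} \binom{N}{i} p^{i}(1-p)^{N-i}\tfrac{d}{i+1} = \sum_{i=d}^{N} f_B(N,i,M_{\text{RVD}}^{-1})\tfrac{d}{i+1}$. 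Adding the two pieces gives~\eqref{eq:thm:expected}.

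I expect the only real obstacle to be the integral evaluation: one must either recall the incomplete-beta/binomial-tail identity in the precise form above, or establish it on the spot (e.g. by repeated integration by parts on $\int_0^p u^{d}(1-u)^{N-d}\,du$, peeling off one factor of $(1-u)$ at a time), and then carry the index bookkeeping through the binomial-coefficient simplifications without a slip. Everything else — checking differentiability, invoking Corollary~\ref{Corollary:Bound:Expected:ByParts}, and identifying $F_N^{\text{SA}}(M_{\text{RVD}}^{-1})$ with the second sum — is routine. Alternatively, one could avoid the beta identity entirely by noting $\int_0^1 \epsilon\, f^{\text{SA}}_{\text{RVD}}(\epsilon)\,d\epsilon = M_{\text{RVD}}\int_0^{1/M_{\text{RVD}}} u\, f^{\text{SA}}_{\hat{\mathcal{P}}}(u)\,du$ is the truncated first moment of the scenario-approach beta law, and relate it to the known full moment $\tfrac{d}{N+1}$ of~\eqref{eq:mean_eps_scen_app} by a tail correction; but the direct computation above seems the cleanest path.
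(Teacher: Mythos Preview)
Your proposal is correct, and the overall architecture---verifying Assumption~\ref{assumption:F} from~\eqref{eq:bound_scen} and~\eqref{eq:PRL_RVD}, invoking Corollary~\ref{Corollary:Bound:Expected:ByParts}, and identifying the boundary term $F_N^{\text{SA}}(M_{\text{RVD}}^{-1})$ with the second sum via~\eqref{eq:bound_scen}---matches the paper exactly.

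The only substantive difference is in how the integral $\int_0^1 \epsilon\,f^{\text{SA}}_{\text{RVD}}(\epsilon)\,d\epsilon$ is evaluated. You substitute $u=\epsilon/M_{\text{RVD}}$ to reduce it to an incomplete beta integral over $[0,1/M_{\text{RVD}}]$ and then appeal to the regularized-incomplete-beta/binomial-tail identity, followed by the re-indexing $i=j-1$ and $\binom{N+1}{i+1}=\tfrac{N+1}{i+1}\binom{N}{i}$; the factor $M_{\text{RVD}}\cdot p=1$ closes the bookkeeping. The paper instead keeps the integral over $[0,1]$ and writes $1-\epsilon/M_{\text{RVD}}=(1-1/M_{\text{RVD}})+(1-\epsilon)/M_{\text{RVD}}$, expands $(\,\cdot\,)^{N-d}$ by the binomial theorem, and evaluates each term as a \emph{complete} beta integral over $[0,1]$ using~\eqref{eq:mean_eps_scen_app}. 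Your route is slightly quicker if the incomplete-beta identity is taken as known, while the paper's is more self-contained since it only uses the binomial theorem and the full-moment formula~\eqref{eq:mean_eps_scen_app}. Both land on the same sum, and your index manipulation checks out.
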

\begin{proof}
    To calculate the worst case mean of $V_{\mathcal{P}}(\hat{\vx}_s(\sD_N))$ for ambiguity measured by the RVD, we use Corollary~\ref{Corollary:Bound:Expected:ByParts}.
    $1-F_N^{\text{SA}} (\hat{\epsilon}_{M_{\text{RVD}}}(\epsilon))$ is continuously differentiable with the corresponding function derived with~\eqref{eq:SA_density} and the chain rule:
    \begin{equation*}
        f^{\text{SA}}_{\text{RVD}}(\epsilon)=\frac{d}{M_{\text{RVD}}}\binom{N}{d}\left( \frac{\epsilon}{M_{\text{RVD}}}\right)^{d-1}(1-\frac{\epsilon}{M_{\text{RVD}}})^{N-d}.
    \end{equation*}
    Note that $F_N^{\text{SA}} (\hat{\epsilon}_{M_{\text{RVD}}}(1))=F_N^{\text{SA}}(M_{\text{RVD}}^{-1})$, so reformulating the integral in Corollary~\ref{Corollary:Bound:Expected:ByParts} leads to
\begin{multline}
    \int_0^1 d \binom{N}{d} (\frac{\epsilon}{M_{\text{RVD}}}) ^d \left(1-\frac{\epsilon}{M_{\text{RVD}}} \right)^{N-d} d\epsilon\\
    = \int_0^1  d \binom{N}{d} (\frac{\epsilon}{M_{\text{RVD}}})^d  \left((1-\frac{1}{M_{\text{RVD}}}) +\frac{1}{M_{\text{RVD}}} (1-\eps) \right)^{N-d} d\epsilon \\
    = \int_0^1  \sum_{i=0}^{N-d} d \binom{N}{d} \binom{N-d}{i} \frac{\epsilon^{d}}{M_{\text{RVD}}^{i+d}} (1-\epsilon)^{i}(1-\frac{1}{M_{\text{RVD}}})^{N-d-i} d\epsilon\\
    = \int_0^1 \sum_{i=d}^{N} \binom{N}{i} \frac{1}{M_{\text{RVD}}^i}(1-\frac{1}{M_{\text{RVD}}})^{N-i} \epsilon d\binom{i}{d}\epsilon^{d-1}(1-\epsilon)^{i-d}   d\epsilon\\
    = \sum_{i=d}^{N} \binom{N}{i} \frac{1}{M_{\text{RVD}}^i} (1-\frac{1}{M_{\text{RVD}}})^{N-i} \int_0^1 \epsilon f^{\text{SA}}_{\hat{\mathcal{P}}}(\epsilon)   d\epsilon\\
    =\sum_{i=d}^{N} f_B(N,i, \frac{1}{M_{\text{RVD}}}) \frac{d}{i+1}.
\end{multline}
The second equality is a reformulation of the last bracket, for which the third equality applies the binomial formula. The fourth equality is derived via an index shift, while for the last equalities~the binomial distribution and~\eqref{eq:mean_eps_scen_app} are used respectively. Using the reformulated integral, we obtain~\eqref{eq:thm:expected}.
\end{proof}
Proposition~\ref{thm:Expected_RVD} gives an analytical solution to the distributionally robust bounds on the expected probability of violation for ambiguity measured by the RVD. Similar to the RVD being able to produce feasible two-level results even for low risk levels, the corresponding one-level bound can be interpreted as a weighted mean of~\eqref{eq:mean_eps_scen_app} for sample sizes $d\leq \hat{N}\leq N$ plus a constant for $\hat{N}<d$.
}

\end{document}